\tikzset{
    labl/.style={anchor=south, rotate=90, inner sep=.5mm}
}
\theoremstyle{definition}
\newtheorem{prop}{Proposition}[section]
\newtheorem{lem}[prop]{Lemma}
\newtheorem{thm}[prop]{Theorem}
\newtheorem{cor}[prop]{Corollary}
\newtheorem{defn}[prop]{Definition}
\newtheorem{conj}[prop]{Conjecture}
\newtheorem{rem}[prop]{Remark}
\title{A Quiver Analogue of Higman's Conjecture}
\author{Lucien Hennecart and Nikolai Perry}
\date{}
\address{School of Mathematics, University of Edinburgh, Edinburgh, UK}
\email{lucien.hennecart@ed.ac.uk}
\email{N.Perry-1@sms.ed.ac.uk}
\begin{document}
\maketitle

\begin{abstract}
An unresolved conjecture by Graham Higman states that for all $n\geq 1$ the number of conjugacy classes of the group of $n \times n$ unitriangular matrices with entries in the finite field $\mathbb{F}_q$ is a polynomial in $q$. In this paper we introduce a new quiver generalization of the conjecture. Motivated by this generalization, we prove that certain operations on quivers leave the relevant counts unchanged. Based on these invariance properties, we solve the introduced conjecture for quivers containing no path of length exceeding two, providing explicit formulas.
\end{abstract}

\renewcommand{\abstractname}{Acknowledgements}
\begin{abstract}
Nikolai Perry is grateful for the support of this work by the Edinburgh Mathematical Society through their student research bursary, as well as the University of Edinburgh's college vacation scholarship.
\end{abstract}

\section{Introduction}
We denote by $k(U_n(\mathbb{k}))$ the number of conjugacy classes of the group $U_n(\mathbb{k})$ of $n \times n$ unitriangular matrices with entries in $\mathbb{k} = \mathbb{F}_q$. First mentioned in \cite{Higman}, Higman's conjecture asserts that $k(U_n(\mathbb{k}))$ is a polynomial in $q$, for all $n \geq 1$. It has been verified for $n\leq 13$ by Arregi and Vera-L\'{o}pez in a series of papers \cite{VA1, VA2, VA3} where a clever brute-force algorithm was devised. The authors have shown that each conjugacy class of $U_n$ contains a unique \textit{canonical} matrix with respect to a certain order on matrix entries, and that a matrix is canonical if and only if certain entries called \textit{inert points} are zero. Their algorithm revolves around constructing canonical matrices while identifying which entries are inert. 

Pak and Soffer \cite{PS} have built on this algorithm while developing a recursive way of calculating the number of conjugacy classes of pattern groups (subgroups of $U_n(\mathbb{k})$ where certain entries are fixed to be zero), verifying the conjecture for $n\leq16$. They have also found a pattern subgroup of $U_{13}(\mathbb{k})$ whose number of conjugacy classes is not polynomial count, and which appears in the recursive expansion for $k(U_{59}(\mathbb{k}))$. This is the main indication that Higman's conjecture could be false.

Mozgovoy \cite{Mozgovoy} has recently given a quiver generalization of the conjecture, as well as a modern way to understand it in terms of linear stacks. This generalization involves the number of commuting pairs of nilpotent endomorphisms of a projective representation of an acyclic quiver. We propose another generalization by instead considering the count of commuting pairs of radical endomorphisms. This count is expected to be related to the character of the Hall algebra that can be constructed from the category of pairs $(P,f)$ of a projective representation of an acyclic quiver together with a radical endomorphism of it. This is a sign that Hall algebra techniques could be fruitful to understand Higman's conjecture or, in the reverse direction, that polynomiality of the character of the Hall algebra witnesses deeper algebraic structures (as for the constructible Hall algebra \cite{sevenhant2001relation,bozec2019counting}, where a $\mathbb{Z}$-graded Borcherds Lie algebra appears). In this paper, we concentrate ourselves on counting properties, leaving the Hall algebra part for further investigations.

After covering some background material and, in particular, the concept of radicals in \S\ref{preliminaries}, we introduce the new, natural quiver generalization in \S\ref{a quiver generalization}, proving results that let us reduce the associated counting problems. We give a complete description of the first nontrivial cases in \S\ref{examples} and explain future research directions in \S\ref{furtherdirections}.

\section{Preliminaries}\label{preliminaries}

Burnside's Lemma allows us to write

\begin{equation*}
    k(U_n(\mathbb{k})) = \frac{\left| \left\{(x,y) \in U_n(\mathbb{k}) \times U_n(\mathbb{k}) \; \middle| \; xy - yx = 0 \right\}\right|}{\left| U_n(\mathbb{k}) \right|},
\end{equation*}
so we see that $k(U_n(\mathbb{k}))$ is a polynomial in $q$ if and only if the number of commuting pairs in $U_n(\mathbb{k})$ is a polynomial in $q$. The elements $x,y \in U_n(\mathbb{k})$ commute precisely when $x-\mathbb{1}_{n \times n}, y-\mathbb{1}_{n \times n} \in T_n$ commute, where we have defined $T_n = T_n(\mathbb{k}) := U_n(\mathbb{k}) - \mathbb{1}_{n \times n}$.
Therefore Higman's conjecture is equivalent to the assertion that 
\begin{equation}\label{classical conjecture, commuting}
    \left| \left\{(x,y) \in T_n \times T_n \; \middle| \; xy - yx = 0 \right\}\right| \in \mathbb{Q}[q], \; \text{for all $n \geq 1$}.
\end{equation}

To identify the object $T_n$, we recall the definition of the radical of a ring:
\begin{defn}[Radical of a Ring]\label{radical of a ring}
The (Jacobson) \textit{radical} $\operatorname{rad}R$ of a ring $R$ is the intersection of all maximal right ideals in $R$.
\end{defn}
\begin{lem}[{{\cite[Lemma 4.1]{Schiffler}}}]\label{radical lemma}
Let $R$ be a ring and $a \in R$. The following are equivalent:
\begin{enumerate}
    \item $a \in \operatorname{rad}R$.
    \item $a$ lies in the intersection of all maximal left ideals in $R$.
    \item For all $b \in R$, the element $1-a\cdot_R b$ is invertible in $R$.
    \item For all $b \in R$, the element $1-b\cdot_R a$ is invertible in $R$.
\end{enumerate}
\end{lem}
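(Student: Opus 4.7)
The plan is to establish the four equivalences through three independent pairs: $(1)\Leftrightarrow(3)$, $(2)\Leftrightarrow(4)$, and $(3)\Leftrightarrow(4)$. The first two are mirror images of one another (exchange ``right'' with ``left'' and $ab$ with $ba$), while the third is the substantive computational core linking the two sides.

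For $(1)\Rightarrow(3)$, I would fix $a\in\operatorname{rad} R$ and $b\in R$ and argue that if $1-ab$ failed to admit a right inverse then $(1-ab)R$ would be a proper right ideal, hence contained in some maximal right ideal $M$; since $a\in M$ by definition, this would force $1=(1-ab)+ab\in M$, a contradiction. A short bootstrap then upgrades right-invertibility to two-sided invertibility: from $(1-ab)c=1$ we get $c=1+abc$, so $1-c\in aR\subseteq\operatorname{rad} R$ (using that $\operatorname{rad} R$, being an intersection of right ideals, is itself a right ideal), and applying the same argument to $c=1-(1-c)$ produces a right inverse of $c$ which must then coincide with its left inverse $1-ab$. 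For $(3)\Rightarrow(1)$ I would argue by contrapositive: if $a$ lay outside some maximal right ideal $M$, then $M+aR=R$, yielding $1-ab=m\in M$ for some $b\in R$, contradicting invertibility.

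The equivalence $(2)\Leftrightarrow(4)$ is formally identical after swapping ``right'' with ``left'' and $ab$ with $ba$, so I would invoke symmetry. The remaining step $(3)\Leftrightarrow(4)$ is the heart of the lemma and is truly non-symmetric. I would prove $(3)\Rightarrow(4)$ by the classical explicit formula: writing $u:=(1-ab)^{-1}$, the candidate inverse of $1-ba$ is $1+bua$. The identities $abu=u-1=uab$, both consequences of $(1-ab)u=u(1-ab)=1$, make the products $(1-ba)(1+bua)$ and $(1+bua)(1-ba)$ telescope to $1$. The reverse direction $(4)\Rightarrow(3)$ is symmetric in $a$ and $b$.

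The main obstacle I anticipate is the guess for the inverse formula in $(3)\Leftrightarrow(4)$: once one posits $1+bua$, the verification is mechanical, but producing this expression is the sole non-routine step. Everything else reduces to standard manipulations with maximal ideals and the definition of $\operatorname{rad} R$ as their intersection.
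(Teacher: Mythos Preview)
The paper does not supply its own proof of this lemma; it is quoted from Schiffler's textbook and stated without argument. Your proposal is correct and is essentially the standard proof one finds in the literature: the implications $(1)\Leftrightarrow(3)$ and $(2)\Leftrightarrow(4)$ via maximal one-sided ideals, together with the Kaplansky-type identity $(1-ba)^{-1}=1+b(1-ab)^{-1}a$ to link $(3)$ and $(4)$.
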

\begin{cor}
For any ring $R$, $\operatorname{rad}R$ is a two-sided ideal in $R$.
\end{cor}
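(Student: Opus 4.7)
The plan is to derive the corollary directly from the equivalence of conditions (1) and (2) in Lemma \ref{radical lemma}, using the elementary fact that an arbitrary intersection of right (respectively left) ideals is again a right (respectively left) ideal.

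First I would observe that by Definition \ref{radical of a ring}, $\operatorname{rad}R$ is expressed as an intersection of right ideals, namely the maximal ones. Since the intersection of any family of right ideals is closed under addition and under right multiplication by elements of $R$, this immediately shows that $\operatorname{rad}R$ is a right ideal of $R$.

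Next I would invoke the equivalence $(1) \Leftrightarrow (2)$ from Lemma \ref{radical lemma}: an element lies in $\operatorname{rad}R$ if and only if it lies in every maximal left ideal. Consequently, $\operatorname{rad}R$ can equally be written as the intersection of all maximal left ideals of $R$. By the analogous observation for left ideals, $\operatorname{rad}R$ is therefore also a left ideal of $R$.

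Combining the two, $\operatorname{rad}R$ is both a left and a right ideal, hence a two-sided ideal. There is no real obstacle here; the entire content has already been packaged into Lemma \ref{radical lemma}, and the corollary simply records the symmetric consequence of its left/right equivalence.
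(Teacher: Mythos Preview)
Your argument is correct and is exactly the intended one: the paper states this corollary without proof, treating it as an immediate consequence of Definition~\ref{radical of a ring} together with the equivalence $(1)\Leftrightarrow(2)$ of Lemma~\ref{radical lemma}, which is precisely what you spell out.
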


It is the case that $T_n = \operatorname{rad}B_n$, where $B_n = B_n(\mathbb{k})$ is the algebra of upper triangular $n \times n$ matrices. Since $T_n$ is a nilpotent ideal, we have a natural way of weakening Higman's conjecture as given in the form of (\ref{classical conjecture, commuting}) by considering powers of the radical: we can ask if, given $l,m \geq 0$ and $n\geq 1$, we have that 
\begin{equation}\label{weakening}
    \left| \left\{(x,y) \in T_n^l \times T_n^l \; \middle| \; xy - yx \in T_n^m \right\}\right| \in \mathbb{Q}[q].
\end{equation}
Note that this is trivially the case when $m \leq 2l$, and that Higman's conjecture is the statement that the answer is ``yes'' for all $n\geq 1$ with $l=1$ and $m=n$.

We also have a natural way of generalizing Higman's conjecture --- we can investigate counting problems relating to radicals of certain algebras that arise from quivers. For this it helps to recall the definition of the radical of a category, for we will be interested in the category of representations of an arbitrary acyclic quiver $Q$, $\operatorname{rep}Q$:
\begin{defn}[Radical of a Category]
The (Jacobson) \textit{radical} $\operatorname{Rad} = \operatorname{Rad}_{\mathcal{C}}$ of an additive $\mathbb{k}$-category $\mathcal{C}$ is the class of morphisms defined as, for all objects $X,Y \in \mathcal{C}$, 
\begin{equation*}\label{radical of a category}
    \operatorname{Rad}(X,Y) = \{f \in \operatorname{Hom}(X,Y) \; | \; 1_{X} - g\circ f \; \text{is an isomorphism $\forall g \in \operatorname{Hom}(Y,X)$}\}
    \subseteq \operatorname{Hom}(X,Y).
\end{equation*}
\end{defn}
Notice that Definitions \ref{radical of a ring} and \ref{radical of a category} align when we consider the endomorphism algebra $\operatorname{End}(X)$ of an object $X \in \mathcal{C}$:  $\operatorname{rad}(\operatorname{End}(X)) = \operatorname{Rad}(X,X)$.

\section{A Quiver Generalization} \label{a quiver generalization}

\subsection{The Conjecture}

The classification of projective representations of quivers does not depend on the ground field $\mathbb{k}$. Using this, Remark \ref{rigorous sense} will make rigorous sense of our following generalization of Higman's conjecture:

\begin{conj}\label{THE CONJECTURE}
Let $Q$ be an acyclic quiver and $P \in \operatorname{rep}Q$ be a projective representation of $Q$ over the field $\mathbb{k} = \mathbb{F}_q$. Then the number of commuting pairs in the radical $\operatorname{rad}(\operatorname{End}(P))$ is a polynomial in $q$.
\end{conj}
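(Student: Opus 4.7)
The plan is first to give a rigorous meaning to the count. Since $Q$ is acyclic, every projective $P \in \operatorname{rep}Q$ decomposes uniquely as $P = \bigoplus_{i \in Q_0} P_i^{n_i}$, where $P_i$ is the indecomposable projective at vertex $i$ and the multiplicities $n_i \in \mathbb{N}$ are independent of the ground field. Since $\operatorname{Hom}(P_i, P_j)$ is spanned by the set of paths between $i$ and $j$ in $Q$, and acyclicity forces $\operatorname{End}(P_i) = \mathbb{k}$, the algebra $\operatorname{End}(P)$ is a block matrix algebra whose structure constants are integers determined solely by $(Q, (n_i))$. Consequently $R := \operatorname{rad}(\operatorname{End}(P))$ is a nilpotent $\mathbb{k}$-algebra of the form $R_{\mathbb{Z}} \otimes_{\mathbb{Z}} \mathbb{k}$ for some integral model $R_{\mathbb{Z}}$, so the count of commuting pairs in $R$ is a meaningful integer-valued function of $q$.

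Second, I would translate the statement into a conjugacy class count via Burnside's lemma, mirroring \S\ref{preliminaries}. Since $R$ is nilpotent, $G := 1 + R$ is a finite group under multiplication in $\operatorname{End}(P)$, and $(1+x)(1+y) = (1+y)(1+x)$ if and only if $xy = yx$. Hence
\begin{equation*}
\bigl|\{(x,y) \in R \times R : xy = yx\}\bigr| = |G|\cdot k(G),
\end{equation*}
and Conjecture~\ref{THE CONJECTURE} is equivalent to polynomiality of $k(G)$ in $q$. Taking $Q$ to be the equioriented $A_n$-quiver and $P = \bigoplus_i P_i$ recovers $R = T_n(\mathbb{k})$ and $G = U_n(\mathbb{k})$, so classical Higman sits inside the conjecture as a special case.

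Third, I would try to reduce the problem to simpler quiver data via operations on $(Q, P)$ that preserve polynomiality of the count --- these are precisely the ``quiver operations'' promised by the abstract. Candidate reductions include removing an isolated source or sink with controlled multiplicity, merging or splitting parallel arrows, and collapsing a vertex through which all paths are short. Combined with a stratification of $R$ by the rank profiles of its off-diagonal blocks, one would try to exhibit each stratum of the commuting variety as an affine fibration over the commuting variety of a strictly simpler nilpotent algebra, so that a recursion on a complexity measure of $(Q, (n_i))$ closes.

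The principal obstacle is intrinsic: since the conjecture contains classical Higman as a special case, any complete proof would resolve a problem that has resisted decades of effort. A realistic version of this plan therefore aims only at structural reductions and at polynomiality for special classes of $(Q, P)$, such as the length-$\leq 2$ case discussed in \S\ref{examples}. The deepest concern, in view of Pak and Soffer's non-polynomial pattern subgroup of $U_{13}$, is that the conjecture itself might fail for some $(Q, P)$: any reduction strategy should therefore be paired with either a ruling-out of such phenomena in the projective setting or an honest search for a counterexample among the simplest unresolved cases.
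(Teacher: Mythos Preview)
The statement you were asked to prove is a \emph{conjecture} in the paper, not a theorem: the paper does not prove it, and explicitly remarks (immediately after stating it) that it contains classical Higman as the special case of the equioriented $A_n$ quiver with all multiplicities $1$. You correctly identify this obstruction in your final two paragraphs, so your proposal is not so much a proof attempt as a research outline --- which is the honest thing to produce here.

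That outline in fact tracks the paper's actual content quite closely. Your first step (integral model of $\operatorname{End}(P)$ via path bases, so that the count is a well-defined function of $q$) is exactly what the paper formalizes in Definition~\ref{dfn:projectiverepresentation} and Remarks~\ref{rigorous sense}--\ref{rad_map_def}. Your second step (Burnside and the bijection $R \leftrightarrow 1+R$) is the content of \S\ref{preliminaries}. Your third step (count-preserving quiver operations and recursion on a complexity measure) is what the paper carries out in \S\ref{reductions}--\S\ref{more reductions}: arrow reversal, zero-vertex removal, source/sink conversion, and source/sink splitting. Your proposed target class --- quivers with no long paths --- is precisely the class the paper resolves in \S\ref{examples}. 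The one place your sketch is vaguer than the paper is the list of reductions: ``removing an isolated source or sink'' and ``collapsing a vertex through which all paths are short'' are in the right spirit but not quite the operations that turn out to work; the paper's actual moves (splitting a source or sink into two, and trading multiplicity at a source/sink for parallel arrows) are sharper and are what make the length-$\leq 2$ case go through.

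In short: there is no gap to name, because there is no proof to give. Your proposal is a reasonable program that essentially rediscovers the paper's strategy; the paper executes that program as far as current techniques allow and leaves the general statement open.
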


\begin{defn}
\label{dfn:projectiverepresentation}
Let $\bigoplus_{i \in Q_0} P_Q(i)^{d_i}$ be a projective representation of an acyclic quiver $Q$, where $P_Q(i)$ denotes the indecomposable projective representation at vertex $i$, and $\mathbf{d} \in \mathbb{Z}_{\geq 0}^{Q_0}$ is what we call the \textit{summand vector}. We denote this representation by $P_{Q, \mathbf{d}}$ and define $A_{Q, \mathbf{d}}$ to be its endomorphism algebra, $\operatorname{End}(P_{Q, \mathbf{d}})$. We define the \textit{count} $[Q,\mathbf{d}]$ to be the number of commuting pairs in $\operatorname{rad}A_{Q, \mathbf{d}} \times \operatorname{rad}A_{Q, \mathbf{d}}$.
\end{defn}

\begin{rem}
Note that $[Q, \mathbf{d}] = 1$ in the degenerate cases where $\mathbf{d} = \mathbf{0}$ or $Q_0 = \varnothing$, since an empty direct sum is the zero object.
\end{rem}

Suppose that we have a quiver
$Q=
\begin{tikzpicture}[baseline={([yshift=-.7ex]current bounding box.center)}]
\draw (0,0) node{1};
\draw (1.5,0) node{2};
\draw (3,0) node {3};
\draw[->] (0.2,0) -- (1.3,0);
\draw[->] (2.8,0) -- (1.7,0);
\end{tikzpicture}
$
and summand vector $\mathbf{d} = (4,0,1)$. Then we compactly visualise this data as
$\left[
\begin{tikzpicture}[baseline={([yshift=-.7ex]current bounding box.center)}]
\draw (0,0) node{4};
\draw (1.5,0) node{0};
\draw (3,0) node {1};
\draw[->] (0.2,0) -- (1.3,0);
\draw[->] (2.8,0) -- (1.7,0);
\end{tikzpicture}
\right],$
which we also take to be alternative notation for $[Q,\mathbf{d}]$. Note that when in square brackets each vertex $i \in Q_0$ is labeled by $d_i$ instead of $i$.

\begin{rem}\label{rigorous sense}
Since every projective representation of $Q$ is isomorphic to $P_{Q, \mathbf{d}}$ for some summand vector $\mathbf{d}$ \cite[Corollary 2.21]{Schiffler}, Conjecture \ref{THE CONJECTURE} is equivalent to the statement that for any acyclic quiver $Q$ and summand vector $\mathbf{d}$, the count $[Q, \mathbf{d}]$ is a polynomial in $q$.
\end{rem}

This is subtly --- though crucially --- different to that of Mozgovoy's generalization \cite[Conjecture 1]{Mozgovoy}, in which it is the set of nilpotent elements that is considered, not the radical. Our generalization is natural, since the radical of a finite dimensional algebra is always a nilpotent \textit{ideal} \cite[Corollary 4.10]{Schiffler}, allowing for a weakening as in (\ref{weakening}). The two generalizations agree precisely when the set of nilpotent elements is a two-sided ideal, as is the case for quivers and summand vectors of the form
\begin{equation*}
    \left[
    \underbrace{
    \begin{tikzpicture}[baseline={([yshift=-.7ex]current bounding box.center)}]
    \draw (0,0) node{1};
    \draw[->](0.2,0) -- (1.3,0);
    \draw (1.5,0) node{1};
    \draw[->](1.7,0) -- (2.8,0);
    \draw (3.3,0) node{$\ldots$};
    \draw[->](3.8,0) -- (4.9,0);
    \draw (5.1,0) node{1};
    \end{tikzpicture}
    }_{n}
    \right],
\end{equation*}
which recovers Higman's classical conjecture (\ref{classical conjecture, commuting}) as the endomorphism algebra is isomorphic to $B_n$.

Regardless of whether Conjecture \ref{THE CONJECTURE} is true, it serves as a good motivation to study the combinatorics of certain objects arising from the endomorphism algebras of projective representations of quivers, in our case --- counts. To this end, we will prove several results that allow us to understand the nature of counts of acyclic quivers and summand vectors in terms of counts of (usually) simpler quivers and summand vectors.

In \S \ref{reductions} we prove that certain operations on quivers and summand vectors leave the algebras unchanged up to isomorphism, hence preserving counts. In \S \ref{more reductions} we present operations that preserve the structures of the radicals and thereby preserve counts, though not necessarily preserving the structures of the algebras.

\subsection{Notations}

Suppose that the object $X$ in an additive $\mathbb{k}$-category $\mathcal{C}$ is given by the (external) direct sum $X = \bigoplus_{i=1}^t X_i$. Now consider the endomorphism algebra $\operatorname{End}(X)$.
Letting $\pi_k:X\longrightarrow X_k$ and $\iota_k:X_k \longrightarrow X$ denote the canonical projection and inclusion morphisms, any $f \in \operatorname{End}(X)$ is uniquely determined by its matrix elements $f_{ij} := \pi_i \circ f \circ \iota_j \in \operatorname{Hom}(X_j, X_i)$ for $i,j=1,\dots,t$.
Furthermore, we have that $(f \circ g)_{ij} = \sum_{k=1}^t f_{ik} \circ g_{kj}$, and so the identification
\begin{equation*}
    f \xleftrightarrow{} 
    \begin{bmatrix}
        f_{11} & \cdots & f_{1t}\\
        \vdots & \ddots & \vdots\\
        f_{t1} & \cdots & f_{tt}
    \end{bmatrix}
    = [f_{ij}]_{i,j = 1,\dots,t}
\end{equation*}
yields an algebra isomorphism
\begin{equation*}
    \operatorname{End}(X) \cong
    \begin{bmatrix}
        \operatorname{Hom}(X_1, X_1) & \cdots & \operatorname{Hom}(X_t, X_1)\\
        \vdots & \ddots & \vdots\\
        \operatorname{Hom}(X_1, X_t) & \cdots & \operatorname{Hom}(X_t, X_t)
    \end{bmatrix}
    = [\operatorname{Hom}(X_j, X_i)]_{i,j = 1,\dots,t},
\end{equation*}
where addition and scalar multiplication in the right-hand algebra is component-wise and multiplication is as above: $(f \circ g)_{ij} = \sum_{k=1}^t f_{ik} \circ g_{kj}$. We will often abuse notation by writing 
$$\operatorname{End}(X) = [\operatorname{Hom}(X_j, X_i)]_{i,j = 1,\dots,t} \;\; \text{and} \;\; f = [f_{ij}]_{i,j = 1,\dots,t} \in \operatorname{End}(X).$$
Similarly, we will understand $\operatorname{rad}(\operatorname{End(X)})$ as $[\operatorname{Rad}(X_j, X_i)]_{i,j = 1,\dots,t}$ \cite[A.3, 3.4. Lemma]{assem_skowronski_simson_2006}.

Given an acyclic quiver $Q$ and summand vector $\mathbf{d}$, we often write $P_{Q, \mathbf{d}}$ as $\bigoplus_{i = 1}^t P_Q(v_i)$, where $v_1, \dots, v_t$ are (not necessarily distinct) vertices in $Q$, so that
$$A_{Q, \mathbf{d}} = \operatorname{End}(P_{Q, \mathbf{d}}) \\= [\operatorname{Hom}(P_Q(v_j), P_Q(v_i))]_{i,j = 1,\dots,t} \;\; \text{and} \;\; \operatorname{rad}A_{Q, \mathbf{d}} = [\operatorname{Rad}(P_Q(v_j), P_Q(v_i))]_{i,j = 1,\dots,t}.
$$ We will also often abbreviate $\operatorname{Hom}(P_Q(v_j), P_Q(v_i))$ by $\operatorname{H}_{ij}$ and $\operatorname{Rad}(P_Q(v_j), P_Q(v_i))$ by $\operatorname{R}_{ij}$.

\begin{rem}\label{map_def}
If for each $i,j$ we define the maps
\begin{align*}
    \phi_{ij}: \operatorname{H}_{ij} = \operatorname{Hom}(P_Q(v_j), P_Q(v_i)) &\longrightarrow P_Q(v_i)_{v_j} = \langle c \; | \; c:v_i \curvearrowright v_j \in Q \rangle; \\
    h = (h^{(k)})_{k \in Q_0} &\longmapsto h^{(v_j)}(e_{v_j}),
\end{align*}
where $\langle c \; | \; c:v_i \curvearrowright v_j \in Q \rangle$ is the $\mathbb{k}$-vector space with basis the set of all paths $v_i \curvearrowright v_j$ in $Q$, then these are vector space isomorphisms \cite[Theorem 2.11]{Schiffler}, and so
$\{\phi_{ij}^{-1}(c) \; | \; c:v_i \curvearrowright v_j \in Q\}$
is a basis for $\operatorname{H}_{ij}$.
\end{rem}

\begin{rem}\label{rad_map_def}
Recall Definition \ref{radical of a category} of the radical of a category. Consider $\operatorname{H}_{ij}$ for $i,j=1,\dots,t$. If $v_i \neq v_j$, then we have the implication $\operatorname{H}_{ij} \neq 0 \implies \operatorname{H}_{ji} = 0$, since Q is acyclic. Therefore we see that $\operatorname{R}_{ij} = \operatorname{H}_{ij}$, if $v_i \neq v_j$. On the other hand, if $v_i =v_j$, then $\operatorname{H}_{ij} \cong \langle e_{v_i} \rangle \cong \mathbb{k}$, and so $\operatorname{R}_{ij} = 0$. Thus, for any $i,j$, it is the case that the isomorphism $\phi_{ij}$ in Remark \ref{map_def} restricted to $\operatorname{R}_{ij}$ (which we denote by $\psi_{ij}$) yields a vector space isomorphism 
\begin{align*}
    \psi_{ij}: \operatorname{R}_{ij} = \operatorname{Rad}(P_Q(v_j), P_Q(v_i)) 
    \; \stackrel{\sim}\longrightarrow \; \langle c \; | \; c:v_i \stackrel{\text{n.c.}}\curvearrowright v_j \in Q \rangle,
\end{align*}
where $\langle c \; | \; c:v_i \stackrel{\text{n.c.}}\curvearrowright v_j \in Q \rangle$ is the $\mathbb{k}$-vector space with basis the set of all non-constant paths $v_i \stackrel{\text{n.c.}}\curvearrowright v_j$ in $Q$. In particular, $\{\psi_{ij}^{-1}(c) \; | \; c:v_i \stackrel{\text{n.c.}}\curvearrowright v_j \in Q\}$
is a basis for $\operatorname{R}_{ij}$.
\end{rem}

\begin{prop}[Composition of Basis Elements]\label{basis_composition}
$ $\newline
\begin{enumerate}
    \item 
    Let $c:v_i \curvearrowright v_k \in Q$ and $p:v_k \curvearrowright v_j \in Q$ be paths. Then
    \begin{equation*}
        \phi_{ik}^{-1}(c) \circ \phi_{kj}^{-1}(p) = \phi_{ij}^{-1}(cp).
    \end{equation*}
    
    \item 
    Let $c:v_i \stackrel{\text{n.c.}}\curvearrowright v_k \in Q$ and $p:v_k \stackrel{\text{n.c.}}\curvearrowright v_j \in Q$ be non-constant paths. Then
    \begin{equation*}
        \psi_{ik}^{-1}(c) \circ \psi_{kj}^{-1}(p) = \psi_{ij}^{-1}(cp).
    \end{equation*}
\end{enumerate}
\end{prop}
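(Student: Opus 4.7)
The plan is to unwind the definition of $\phi_{ij}$ from Remark~\ref{map_def} and invoke naturality of morphisms of representations. Set $h := \phi_{ik}^{-1}(c) \in \operatorname{Hom}(P_Q(v_k), P_Q(v_i))$ and $g := \phi_{kj}^{-1}(p) \in \operatorname{Hom}(P_Q(v_j), P_Q(v_k))$, so that by construction $h^{(v_k)}(e_{v_k}) = c$ and $g^{(v_j)}(e_{v_j}) = p$. Then
$$\phi_{ij}(h \circ g) = (h \circ g)^{(v_j)}(e_{v_j}) = h^{(v_j)}\bigl(g^{(v_j)}(e_{v_j})\bigr) = h^{(v_j)}(p),$$
so part (1) reduces to showing $h^{(v_j)}(p) = cp$ as an element of $P_Q(v_i)_{v_j}$.

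For this I would use the standard fact that in the indecomposable projective $P_Q(v_k)$, the basis element $p \in P_Q(v_k)_{v_j}$ is precisely $P_Q(v_k)(p)(e_{v_k})$, where $P_Q(v_k)(p) : P_Q(v_k)_{v_k} \to P_Q(v_k)_{v_j}$ is the linear map obtained by composing the actions of the arrows of $p$. Since $h$ is a morphism of representations it commutes with the action of every arrow, hence of every path; applied to $p$ this yields
$$h^{(v_j)}(p) = h^{(v_j)}\bigl(P_Q(v_k)(p)(e_{v_k})\bigr) = P_Q(v_i)(p)\bigl(h^{(v_k)}(e_{v_k})\bigr) = P_Q(v_i)(p)(c) = cp,$$
where the final equality is the definition of the action of the path $p : v_k \curvearrowright v_j$ on the basis element $c : v_i \curvearrowright v_k$ of $P_Q(v_i)_{v_k}$: it produces the concatenation $cp : v_i \curvearrowright v_j$. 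Applying $\phi_{ij}^{-1}$ gives part (1).

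Part (2) then comes essentially for free. By Remark~\ref{rad_map_def}, $\psi_{ij}$ is simply the restriction of $\phi_{ij}$ to $\operatorname{R}_{ij}$, whose basis is indexed by non-constant paths. If $c$ and $p$ are both non-constant, then the concatenation $cp$ is again non-constant, so $\phi_{ij}^{-1}(cp) = \psi_{ij}^{-1}(cp)$ lies in $\operatorname{R}_{ij}$; substituting into (1) yields (2). The only genuinely non-formal point in the whole argument is the naturality step in (1), and even that is routine once $P_Q(v_i)(p)(c)$ is identified with the concatenated path $cp$, so I do not foresee any real obstacle.
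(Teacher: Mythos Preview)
Your proof is correct and follows essentially the same approach as the paper: both compute $\phi_{ij}(h\circ g) = h^{(v_j)}(p)$, then rewrite $p$ as the action of the path $p$ on $e_{v_k}$ and invoke naturality of the morphism $h$ to obtain $cp$. The paper writes the path action as a single map $\varphi_p$ and displays the naturality square explicitly, while you spell out the two instances $P_Q(v_k)(p)$ and $P_Q(v_i)(p)$, but the argument is identical in substance; part~(2) is deduced from the restriction in the same way.
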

\begin{proof}
$ $\newline
\begin{enumerate}
    \item 
    
    It suffices to show that $\phi_{ij}(\phi_{ik}^{-1}(c) \circ \phi_{kj}^{-1}(p)) = cp$. Indeed,
    \begin{align*}
        \phi_{ij}(\phi_{ik}^{-1}(c) \circ \phi_{kj}^{-1}(p)) 
        &= (\phi_{ik}^{-1}(c) \circ \phi_{kj}^{-1}(p))^{(v_j)}(e_{v_j}) 
        = (\phi_{ik}^{-1}(c))^{(v_j)} \circ (\phi_{kj}^{-1}(p)))^{(v_j)}(e_{v_j})\\
        &= (\phi_{ik}^{-1}(c))^{(v_j)} \circ \phi_{kj}(\phi_{kj}^{-1}(p)) 
        = (\phi_{ik}^{-1}(c))^{(v_j)}(p) 
        = (\phi_{ik}^{-1}(c))^{(v_j)} \circ \varphi_{p}(e_{v_k})\\
        &\stackrel{(\star)}{=} \varphi_{p} \circ (\phi_{ik}^{-1}(c))^{(v_k)} (e_{v_k})
        = \varphi_{p} \circ \phi_{ik}(\phi_{ik}^{-1}(c))
        = \varphi_{p}(c)
        = cp,
    \end{align*}
    where $\varphi_p$ is the linear mapping given by the action of $p$ and in $(\star)$ we made use of the following commutative diagram:

    \begin{center}
    \begin{tikzcd}[row sep=10ex, column sep=10ex]
    P_Q(v_k)_{v_k} \arrow[r, "\varphi_p", bend left=10] \arrow[d, "(\phi_{ik}^{-1}(c))^{(v_k)}", swap] 
    & P_Q(v_k)_{v_j} \arrow[d, "(\phi_{ik}^{-1}(c))^{(v_j)}"] \\
    P_Q(v_i)_{v_k} \arrow[r, "\varphi_p", bend left=10] 
    & P_Q(v_i)_{v_j}
    \end{tikzcd}
    \end{center}
    
    \item This follows from the fact that each $\psi_{ij}$ is a restriction of $\phi_{ij}$.
\end{enumerate}
\end{proof}

\subsection{Count-Preserving Operations}\label{reductions}
The following theorem and corollary show that counts are invariant under reversal of all arrows in their corresponding quivers. This is useful for later, where we will prove that certain operations on sources leave counts unchanged, and thus it will follow that the analogous operations on sinks leave counts unchanged too.

\begin{thm}\label{opposite_quiver}
Let $Q$ be an acyclic quiver and $\mathbf{d} \in \mathbb{Z}_{\geq 0}^{Q_0}$. Then $A^{\operatorname{op}}_{Q, \mathbf{d}} \cong A_{Q^{\operatorname{op}}, \mathbf{d}}$
, as $\mathbb{k}$-algebras.
\end{thm}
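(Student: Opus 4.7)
The plan is to produce an explicit $\mathbb{k}$-algebra anti-isomorphism $T: A_{Q,\mathbf{d}} \to A_{Q^{\operatorname{op}},\mathbf{d}}$, which is equivalent to a $\mathbb{k}$-algebra isomorphism $A_{Q,\mathbf{d}}^{\operatorname{op}} \cong A_{Q^{\operatorname{op}},\mathbf{d}}$. Working with the matrix representation from \S 3.2, I would write $P_{Q,\mathbf{d}} = \bigoplus_{i=1}^{t} P_Q(v_i)$ and $P_{Q^{\operatorname{op}},\mathbf{d}} = \bigoplus_{i=1}^{t} P_{Q^{\operatorname{op}}}(v_i)$ using the same ordered list of vertices. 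The crucial ingredient is that reversing arrows gives a bijection between paths in $Q$ and paths in $Q^{\operatorname{op}}$, so Remark \ref{map_def} applied to both quivers yields, for each pair $i,j$, a canonical vector-space isomorphism
\[
\rho_{ij}: \operatorname{Hom}(P_Q(v_i),P_Q(v_j)) \;\xrightarrow{\sim}\; \operatorname{Hom}(P_{Q^{\operatorname{op}}}(v_j),P_{Q^{\operatorname{op}}}(v_i))
\]
sending the basis element associated to a path $c: v_j \curvearrowright v_i$ in $Q$ to the basis element associated to its reverse $\bar c: v_i \curvearrowright v_j$ in $Q^{\operatorname{op}}$. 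I would then define $T([f_{ij}])_{ij} := \rho_{ij}(f_{ji})$, i.e.\ transpose the matrix and apply $\rho$ entrywise. Both $\mathbb{k}$-linearity and bijectivity are immediate, since $\rho_{ij}$ is a bijection sending basis vectors to basis vectors and reversal of paths is an involution.

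The substance is in showing $T(fg) = T(g)T(f)$. By $\mathbb{k}$-linearity I would reduce to the case where $f$ is an elementary matrix with unique nonzero entry $\phi_{ij}^{-1}(c)$ in position $(i,j)$ for some path $c:v_i \curvearrowright v_j$ in $Q$, and $g$ is an elementary matrix with unique nonzero entry $\phi_{kl}^{-1}(p)$ in position $(k,l)$. Then $fg$ is nonzero only when $j=k$, in which case Proposition \ref{basis_composition}(1) gives $(fg)_{il} = \phi_{il}^{-1}(cp)$, so $T(fg)$ has the entry $\phi^{-1}_{li}(\overline{cp})$ in position $(l,i)$ and zero elsewhere. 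Dually, $T(g)$ has entry $\phi^{-1}_{lk}(\bar p)$ at $(l,k)$ and $T(f)$ has entry $\phi^{-1}_{ji}(\bar c)$ at $(j,i)$; their product is nonzero only when $k=j$, matching the constraint on $fg$, and applying Proposition \ref{basis_composition}(1) in $Q^{\operatorname{op}}$ yields $\phi^{-1}_{li}(\bar p\, \bar c)$ at position $(l,i)$. Since path reversal takes the concatenation $cp$ in $Q$ to the concatenation $\bar p\, \bar c$ in $Q^{\operatorname{op}}$, we have $\overline{cp} = \bar p\, \bar c$, so the two expressions agree.

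The only step that requires genuine care is the simultaneous bookkeeping of matrix transposition and path reversal, which is precisely what forces the multiplication to reverse order; Proposition \ref{basis_composition}(1) encapsulates essentially all of the real computation. A conceptually slicker alternative would be to identify $A_{Q,\mathbf{d}}$ with a corner algebra $e\,\mathbb{k}Q\,e$ of the path algebra, for the idempotent $e$ determined by $\mathbf{d}$, and then invoke the standard anti-isomorphism $(\mathbb{k}Q)^{\operatorname{op}} \cong \mathbb{k}Q^{\operatorname{op}}$ induced by path reversal, but this abstract route essentially repackages the computation above while presuming slightly more machinery than the preliminaries set up.
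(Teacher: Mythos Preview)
Your proof is correct and follows essentially the same approach as the paper: both construct the map by transposing the block matrix and applying the path-reversal bijection on each Hom-space via Remark~\ref{map_def}, and both verify multiplicativity by reducing to basis elements using Proposition~\ref{basis_composition}(1). The only cosmetic differences are that you phrase the map as an anti-isomorphism $A_{Q,\mathbf{d}}\to A_{Q^{\operatorname{op}},\mathbf{d}}$ rather than an isomorphism out of $A_{Q,\mathbf{d}}^{\operatorname{op}}$, and you omit the explicit check that $T(1)=1$ (which the paper carries out, though it follows automatically from bijectivity and anti-multiplicativity).
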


\begin{proof}

We write $P_{Q, \mathbf{d}}$ as $\bigoplus_{i = 1}^t P_Q(v_i)$ and $P_{Q^{\operatorname{op}}, \mathbf{d}}$ as $\bigoplus_{i = 1}^t P_{Q^{\operatorname{op}}}(v_i)$ so that
\begin{align*}
    A_{Q,\mathbf{d}} 
    &= [\operatorname{Hom}(P_Q(v_j), P_Q(v_i))]_{i,j = 1,\dots,t} 
    = [\operatorname{H}_{ij}]_{i,j = 1,\dots,t} \quad \text{and}\\
    A_{Q^{\operatorname{op}},\mathbf{d}} 
    &= [\operatorname{Hom}(P_{Q^{\operatorname{op}}}(v_j), P_{Q^{\operatorname{op}}}(v_i))]_{i,j = 1,\dots,t} 
    = [\operatorname{H}_{ij}^{\operatorname{op}}]_{i,j = 1,\dots,t}.
\end{align*}
Recall that the opposite algebra $A_{Q,\mathbf{d}}^{\operatorname{op}}$ is the same as $A_{Q,\mathbf{d}}$ but with multiplication reversed, i.e. \\$f \cdot_{A_{Q,\mathbf{d}}^{\operatorname{op}}}g = g \cdot_{A_{Q,\mathbf{d}}}f$.\\

For each $i,j = 1,\dots, t$ we define the vector space isomorphisms
\begin{align*}
    \phi_{ij}: \operatorname{H}_{ij} \; \stackrel{\sim}\longrightarrow \; \langle c \; | \; c:v_i \curvearrowright v_j \in Q \rangle \quad \text{and} \quad
    \phi_{ij}^{\operatorname{op}}: \operatorname{H}_{ij}^{\operatorname{op}} \; \stackrel{\sim}\longrightarrow \; \langle c \; | \; c:v_i \curvearrowright v_j \in Q^{\operatorname{op}} \rangle
\end{align*}
as in Remark \ref{map_def}. Now $c$ is a path $v_i \curvearrowright v_j$ in $Q$ if and only if $c^{\operatorname{op}}$ is a path $v_j \curvearrowright v_i$ in $Q^{\operatorname{op}}$, so for each $i,j = 1,\dots,t$ we can define vector space isomorphisms by specifying their actions on the basis elements:
\begin{equation*}
    \Phi_{ij}: \operatorname{H}_{ij} \; \stackrel{\sim}\longrightarrow \; \operatorname{H}_{ji}^{\operatorname{op}}; \quad \phi_{ij}^{-1}(c) \longmapsto (\phi_{ji}^{\operatorname{op}})^{-1}(c^{\operatorname{op}}).
\end{equation*}

Now note that if $c:v_j \curvearrowright v_k$ and $p:v_k \curvearrowright v_i$ are paths in $Q$, then
\begin{align*}
    \Phi_{ji}(\phi_{jk}^{-1}(c) \circ \phi_{ki}^{-1}(p))
    &= \Phi_{ji}(\phi_{ji}^{-1}(cp)) 
    = (\phi_{ij}^{\operatorname{op}})^{-1}((cp)^{\operatorname{op}})
    = (\phi_{ij}^{\operatorname{op}})^{-1}(p^{\operatorname{op}} c^{\operatorname{op}})
    = (\phi_{ik}^{\operatorname{op}})^{-1}(p^{\operatorname{op}}) \circ (\phi_{kj}^{\operatorname{op}})^{-1}(c^{\operatorname{op}})\\
    &= \Phi_{ki}(\phi_{ki}^{-1}(p)) \circ \Phi_{jk}(\phi_{jk}^{-1}(c)),
\end{align*}
where we have used Proposition \ref{basis_composition} twice. And so it follows from the linearity of each $\Phi_{ij}$ that for any $g_{jk} \in \operatorname{H}_{jk}$ and $f_{ki} \in \operatorname{H}_{ki}$
we have 
\begin{equation}\label{mult_preservation}
    \Phi_{ji}(g_{jk} \circ f_{ki}) = \Phi_{ki}(f_{ki}) \circ \Phi_{jk}(g_{jk}).
\end{equation}

Finally, we define the map $\mathbf{\Phi}: A_{Q,\mathbf{d}}^{\operatorname{op}} \; \longrightarrow A_{Q^{\operatorname{op}}, \mathbf{d}}$ by $(\mathbf{\Phi}(f))_{ij} = \Phi_{ji}(f_{ji})$. We show that this is an isomorphism of $\mathbb{k}$-algebras, hence proving the theorem:
\begin{itemize}
    \item 
    This is an isomorphism of vector spaces since it can be viewed as component-wise application of the vector space isomorphisms $\Phi_{ij}$ followed by transposition.
    
    \item
    The unity $1_{A_{Q,\mathbf{d}}^{\operatorname{op}}}$ of $A_{Q,\mathbf{d}}^{\operatorname{op}}$ is such that
    \begin{equation*}
        (1_{A_{Q,\mathbf{d}}^{\operatorname{op}}})_{ij}
        = 
        \begin{cases}
            0, \;\; \text{if} \;\; i \neq j, \\ 1_{\operatorname{End}(P_Q(v_i))}, \;\; \text{if} \;\; i = j,
        \end{cases}
    \end{equation*}
    and so
    \begin{equation*}
        (\mathbf{\Phi}(1_{A_{Q,\mathbf{d}}^{\operatorname{op}}}))_{ij} 
        = \Phi_{ji}((1_{A_{Q,\mathbf{d}}^{\operatorname{op}}})_{ji})
        =
         \begin{cases}
            0, \;\; \text{if} \;\; i \neq j, \\ \Phi_{ii}(1_{\operatorname{End}(P_Q(v_i))}), \;\; \text{if} \;\; i = j.
        \end{cases}
    \end{equation*}
    But $\Phi_{ii}(1_{\operatorname{End}(P_Q(v_i))}) = \Phi_{ii}(\phi_{ii}^{-1}(e_{v_i})) = (\phi_{ii}^{\operatorname{op}})^{-1}(e_{v_i}) = 1_{\operatorname{End}(P_{Q^{\operatorname{op}}}(v_i))}$, so we see that \\
    $\mathbf{\Phi}(1_{A_{Q,\mathbf{d}}^{\operatorname{op}}}) = 1_{A_{Q^{\operatorname{op}},\mathbf{d}}}$.
    
    \item
    Let $f,g \in A_{Q, \mathbf{d}}^{\operatorname{op}}$. We have that
    \begin{align*}
        (\mathbf{\Phi}(f \cdot_{A_{Q, \mathbf{d}}^{\operatorname{op}}} g))_{ij} 
        &= \Phi_{ji}((f \cdot_{A_{Q, \mathbf{d}}^{\operatorname{op}}} g)_{ji})
        = \Phi_{ji}((g \cdot_{A_{Q, \mathbf{d}}} f)_{ji})
        = \sum_{k} \Phi_{ji}(g_{jk} \circ f_{ki})\\
        &\stackrel{(\ref{mult_preservation})}= \sum_{k} \Phi_{ki}(f_{ki}) \circ \Phi_{jk}(g_{jk})
        = \sum_{k} (\mathbf{\Phi}(f))_{ik} \circ (\mathbf{\Phi}(g))_{kj},
    \end{align*}
    so 
    $\mathbf{\Phi}(f \cdot_{A_{Q, \mathbf{d}}^{\operatorname{op}}} g)
    = \mathbf{\Phi}(f) \cdot_{A_{Q^{\operatorname{op}}, \mathbf{d}}} \mathbf{\Phi}(g)$.
\end{itemize}
Thus $\mathbf{\Phi}$ is an isomorphism of $\mathbb{k}$-algebras.
\end{proof}

\begin{cor}[Arrow Reversal]
\label{cor:opposite}
If $Q$ is acyclic and $\mathbf{d} \in \mathbb{Z}_{\geq 0}^{Q_0}$, then $[Q, \mathbf{d}] = [Q^{\operatorname{op}}, \mathbf{d}]$.
\end{cor}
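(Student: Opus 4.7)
The corollary is essentially an immediate consequence of Theorem \ref{opposite_quiver}. The plan is to observe that the count $[Q,\mathbf{d}]$ is invariant both under passage to the opposite algebra and under $\mathbb{k}$-algebra isomorphism, and then to chain these two invariances via Theorem \ref{opposite_quiver}.

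First I would argue that for any $\mathbb{k}$-algebra $A$, the number of commuting pairs in $\operatorname{rad}(A)$ equals the number of commuting pairs in $\operatorname{rad}(A^{\operatorname{op}})$. This rests on two facts: (i) as subsets of the common underlying set, $\operatorname{rad}(A) = \operatorname{rad}(A^{\operatorname{op}})$, which follows from Lemma \ref{radical lemma} since conditions (1)--(2) and (3)--(4) pair up the left/right characterizations of the Jacobson radical, making it symmetric under the opposite construction; and (ii) the commuting relation is itself symmetric, so $xy = yx$ in $A$ holds if and only if $x \cdot_{A^{\operatorname{op}}} y = y \cdot_{A^{\operatorname{op}}} x$ in $A^{\operatorname{op}}$. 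Together these give a bijection (in fact, the identity on the underlying set) between commuting pairs in $\operatorname{rad}(A)$ and commuting pairs in $\operatorname{rad}(A^{\operatorname{op}})$.

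Second, I would note that any isomorphism of $\mathbb{k}$-algebras restricts to a bijection between their Jacobson radicals, since the radical is defined in purely algebraic terms, and sends commuting pairs to commuting pairs. Applying this to the isomorphism $A^{\operatorname{op}}_{Q,\mathbf{d}} \cong A_{Q^{\operatorname{op}},\mathbf{d}}$ of Theorem \ref{opposite_quiver} yields a bijection between commuting pairs in $\operatorname{rad}(A^{\operatorname{op}}_{Q,\mathbf{d}})$ and commuting pairs in $\operatorname{rad}(A_{Q^{\operatorname{op}},\mathbf{d}})$. Chaining the two steps gives $[Q,\mathbf{d}] = [Q^{\operatorname{op}},\mathbf{d}]$. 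There is no genuine obstacle here: the substantive work was carried out in Theorem \ref{opposite_quiver}, and the corollary merely transcribes that algebra isomorphism into the desired count-level equality.
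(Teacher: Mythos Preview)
Your proposal is correct and follows essentially the same approach as the paper: both establish that $\operatorname{rad}A = \operatorname{rad}A^{\operatorname{op}}$ via Lemma~\ref{radical lemma}, observe that commuting pairs coincide under the opposite operation, and then invoke the isomorphism $A_{Q,\mathbf{d}}^{\operatorname{op}} \cong A_{Q^{\operatorname{op}},\mathbf{d}}$ of Theorem~\ref{opposite_quiver}. You are slightly more explicit about why the commuting condition and the radical are preserved under isomorphism, but the underlying argument is identical.
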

\begin{proof}
For any algebra $A$, by Lemma \ref{radical lemma} we have that
\begin{align*}
    x \in \operatorname{rad}A \; &\Leftrightarrow \; \forall y \in A, 1 - x \cdot_A y \;\; \text{is invertible in} \; A\\
    &\Leftrightarrow \; \forall y \in A^{\operatorname{op}}, 1 - y \cdot_{A^{\operatorname{op}}} x \;\; \text{is invertible in} \; A^{\operatorname{op}} \;
    \Leftrightarrow \; x \in \operatorname{rad}A^{\operatorname{op}},
\end{align*}
so in particular $\operatorname{rad}A$ and $\operatorname{rad}A^{\operatorname{op}}$ have the same number of commuting pairs. The result follows from the fact that $A_{Q, \mathbf{d}}^{\operatorname{op}} \cong A_{Q^{\operatorname{op}}, \mathbf{d}}$, by Theorem \ref{opposite_quiver}.
\end{proof}

Our next theorem shows that if $d_v = 0$ for some vertex $v$ in $Q$, then we can remove that vertex so long as we preserve the paths through it as follows:
\begin{equation*}
    \begin{tikzpicture}[baseline={([yshift=-.7ex]current bounding box.center)}]
    \draw (1.5,1) node{$v$};
    \draw (0,0);
    \draw (0,2);
    \draw (3,2);
    \draw (3,0);
    \draw (0.5,1.1) node{\vdots};
    \draw (2.5,1.1) node{\vdots};
    \draw[->] (0.15,0.15) -- (1.35,0.85) node[midway,below right] {$\alpha_m$};
    \draw[->] (0.15,1.85) -- (1.35,1.15) node[midway,above right] {$\alpha_1$};
    \draw[->] (1.65,1.15) -- (2.85,1.85) node[midway,above left] {$\beta_1$};
    \draw[->] (1.65,0.85) -- (2.85,0.15) node[midway,below left] {$\beta_n$};
    \end{tikzpicture}
    \rightsquigarrow
    \begin{tikzpicture}[baseline={([yshift=-.7ex]current bounding box.center)}]
    \draw (0,0);
    \draw (0,2);
    \draw (3,2);
    \draw (3,0);
    \draw (1.5,1.6) node{\vdots};
    \draw (1.5,0.6) node{\vdots};
    
    \draw[->] (0.15,0.15) -- (2.85,0.15) node[midway,below] {$\alpha_m \beta_n$};;
    \draw[->] (0.15,0.3) -- (2.85,1.7) node[below left, rotate = 25] {$\alpha_m \beta_1$};
    \draw[->] (0.15,1.7) node[below right, rotate = -25] {$\alpha_1 \beta_n$} -- (2.85,0.3);
    \draw[->] (0.15,1.85) -- (2.85,1.85) node[midway,above] {$\alpha_1 \beta_1$};
    \end{tikzpicture}.
\end{equation*}
\begin{thm}[Zero-Vertex Removal]\label{zero node removal}
Suppose that $Q$ is acyclic and $\mathbf{d} \in \mathbb{Z}_{\geq 0}^{Q_0}$ is such that $d_v=0$ for some vertex $v$. Let $\overline{Q}_0 := Q_0 \setminus \{v\}$ and $\overline{\mathbf{d}} := (d_i)_{i \in \overline{Q}_0}$. Further, let $\mathcal{A} := \{\alpha \in Q_1 \; | \; t(\alpha) = v\}$ and $\mathcal{B} := \{\beta \in Q_1 \; | \; s(\beta) = v\}$, so that $\mathcal{AB} = \{\alpha \beta \; | \; \alpha \in \mathcal{A}, \beta \in \mathcal{B}\}$ is the set of paths of length two in $Q$ with middle vertex $v$. Thinking of $\mathcal{AB}$ as a set of arrows, let $\overline{Q}_1 := Q_1 \cup \mathcal{AB} \setminus (\mathcal{A} \cup \mathcal{B})$, giving the quiver $\overline{Q} := (\overline{Q}_0, \overline{Q}_1)$. Then $A_{Q, \mathbf{d}} \cong A_{\overline{Q}, \overline{\mathbf{d}}}$ as $\mathbb{k}$-algebras. In particular, $[Q, \mathbf{d}] = [\overline{Q}, \overline{\mathbf{d}}]$.
\end{thm}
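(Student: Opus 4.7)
The plan is to construct an explicit basis-level bijection between the endomorphism algebras, by passing through the combinatorial description of $\operatorname{Hom}$-spaces given in Remark \ref{map_def}.

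First I would fix the setup. Since $d_v = 0$, we may write $P_{Q, \mathbf{d}} = \bigoplus_{i=1}^t P_Q(v_i)$ with all $v_i \neq v$, and correspondingly $P_{\overline{Q}, \overline{\mathbf{d}}} = \bigoplus_{i=1}^t P_{\overline{Q}}(v_i)$. By Remark \ref{map_def}, for each pair $i,j$ the space $\operatorname{H}_{ij} = \operatorname{Hom}(P_Q(v_j), P_Q(v_i))$ has basis indexed by paths $v_i \curvearrowright v_j$ in $Q$, and similarly for $\operatorname{H}_{ij}^{\overline{Q}}$ with paths in $\overline{Q}$.

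Next I would define a combinatorial bijection $\sigma$ between paths $v_i \curvearrowright v_j$ in $Q$ and paths $v_i \curvearrowright v_j$ in $\overline{Q}$ for any fixed $v_i, v_j \neq v$. Given a path $p = a_1 a_2 \cdots a_\ell$ in $Q$, each arrow $a_k$ is either in $Q_1 \setminus (\mathcal{A} \cup \mathcal{B})$, in $\mathcal{A}$ (ending at $v$), or in $\mathcal{B}$ (starting at $v$). Since the endpoints of the path are not $v$, every occurrence of an arrow from $\mathcal{A}$ is immediately followed by an arrow from $\mathcal{B}$ and vice versa; replacing each such consecutive pair $\alpha\beta$ by the corresponding arrow of $\mathcal{AB} \subseteq \overline{Q}_1$ produces a path in $\overline{Q}$. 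The inverse expands each $\mathcal{AB}$-arrow back into its two constituent arrows. The key verification at this step is that the factorization of the visits to $v$ into $\mathcal{A}\mathcal{B}$-pairs is unambiguous, which uses precisely $d_v = 0$ (so the endpoints avoid $v$) together with the fact that every arrow incident to $v$ lies in $\mathcal{A} \cup \mathcal{B}$. This bijection obviously respects concatenation: if $p: v_i \curvearrowright v_k$ and $q: v_k \curvearrowright v_j$ with $v_i, v_j, v_k \neq v$, then no contraction or expansion crosses the boundary between $p$ and $q$, so $\sigma(pq) = \sigma(p)\sigma(q)$.

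I would then promote $\sigma$ to an algebra isomorphism. Linearly extend $\sigma$ to $\Psi_{ij} : \operatorname{H}_{ij} \to \operatorname{H}^{\overline{Q}}_{ij}$ via $\phi_{ij}^{-1}(c) \mapsto (\phi^{\overline{Q}}_{ij})^{-1}(\sigma(c))$, and define $\mathbf{\Psi} : A_{Q, \mathbf{d}} \to A_{\overline{Q}, \overline{\mathbf{d}}}$ by $(\mathbf{\Psi}(f))_{ij} = \Psi_{ij}(f_{ij})$. This is a vector space isomorphism by construction. Multiplicativity is checked on basis elements: by Proposition \ref{basis_composition} applied in $Q$ and in $\overline{Q}$, together with $\sigma(cp) = \sigma(c)\sigma(p)$, one obtains
\[
    \Psi_{ij}\!\left(\phi_{ik}^{-1}(c) \circ \phi_{kj}^{-1}(p)\right)
    = (\phi^{\overline{Q}}_{ij})^{-1}(\sigma(c)\sigma(p))
    = \Psi_{ik}(\phi_{ik}^{-1}(c)) \circ \Psi_{kj}(\phi_{kj}^{-1}(p)),
\]
and bilinearity upgrades this to the matrix-multiplication formula exactly as in the proof of Theorem \ref{opposite_quiver}. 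The unit is preserved since $\sigma(e_{v_i}) = e_{v_i}$. The equality $[Q, \mathbf{d}] = [\overline{Q}, \overline{\mathbf{d}}]$ then follows because isomorphic $\mathbb{k}$-algebras have isomorphic radicals and hence identical commuting-pair counts.

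The only genuine obstacle is the combinatorial bijection: one has to articulate cleanly why the $\mathcal{A}\mathcal{B}$-pairings inside a $Q$-path are forced, and why $\sigma$ is compatible with concatenation at an intermediate vertex $v_k$ (which again requires $v_k \neq v$, i.e. the endpoints appearing in the decomposition of Proposition \ref{basis_composition} are non-$v$ vertices). Everything else is a formal bookkeeping exercise mirroring the proof of Theorem \ref{opposite_quiver}.
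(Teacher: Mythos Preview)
Your proposal is correct and follows essentially the same approach as the paper: both set up the path bijection between $Q$ and $\overline{Q}$ (avoiding $v$), promote it to linear isomorphisms $\operatorname{H}_{ij}\to\overline{\operatorname{H}}_{ij}$, and verify multiplicativity via Proposition~\ref{basis_composition} together with $\overline{cp}=\overline{c}\,\overline{p}$. If anything, you are more explicit than the paper about why the $\mathcal{A}\mathcal{B}$-pairing inside a $Q$-path is forced and why concatenation is respected; the paper simply asserts the correspondence $c\mapsto\overline{c}$ without elaboration.
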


\begin{proof}
As before, we write $P_{Q, \mathbf{d}}$ as $\bigoplus_{i = 1}^t P_Q(v_i)$. Note that $v_i \neq v$ for all $i$, since $d_v = 0$, so we can consistently write $P_{\overline{Q}, \overline{\mathbf{d}}}$ as $\bigoplus_{i = 1}^t P_{\overline{Q}}(v_i)$. Hence
\begin{align*}
    A_{Q,\mathbf{d}} 
    &= [\operatorname{Hom}(P_Q(v_j), P_Q(v_i))]_{i,j = 1,\dots,t} 
    = [\operatorname{H}_{ij}]_{i,j = 1,\dots,t} \quad \text{and}\\
    A_{\overline{Q},\overline{\mathbf{d}}} 
    &= [\operatorname{Hom}(P_{\overline{Q}}(v_j), P_{\overline{Q}}(v_i))]_{i,j = 1,\dots,t} 
    = [\overline{\operatorname{H}}_{ij}]_{i,j = 1,\dots,t}.
\end{align*}

For each $i,j = 1,\dots, t$ we define the vector space isomorphisms
\begin{align*}
    \phi_{ij}: \operatorname{H}_{ij} \; \stackrel{\sim}\longrightarrow \; \langle c \; | \; c:v_i \curvearrowright v_j \in Q \rangle \quad \text{and} \quad
    \overline{\phi}_{ij}: \overline{\operatorname{H}}_{ij} \; \stackrel{\sim}\longrightarrow \; \langle c \; | \; c:v_i \curvearrowright v_j \in \overline{Q} \rangle
\end{align*}
as in Remark \ref{map_def}. If $c:v_i \curvearrowright v_j$ is a path in $Q$, then $c$ does not have the vertex $v$ as an endpoint, and so let $\overline{c}:v_i \curvearrowright v_j$ denote the corresponding path in $\overline{Q}$. Each path in $\overline{Q}$ arises in this way. Thus, for each $i,j = 1,\dots,t$ we can define vector space isomorphisms by specifying their actions on basis elements:
\begin{equation*}
    \Phi_{ij}: \operatorname{H}_{ij} \; \stackrel{\sim}\longrightarrow \; \overline{\operatorname{H}}_{ij}; \quad \phi_{ij}^{-1}(c) \longmapsto \overline{\phi}_{ij}^{-1}(\overline{c}).
\end{equation*}

Clearly, if $c:v_i \curvearrowright v_j$ and $p:v_j \curvearrowright v_k$ are paths in $Q$, then $\overline{cp} = \overline{c} \, \overline{p}$. Using this, along with Proposition \ref{basis_composition}, we have
\begin{align*}
    \Phi_{ij}(\phi_{ik}^{-1}(c) \circ \phi_{kj}^{-1}(p)) 
    &= \Phi_{ij}(\phi_{ij}^{-1}(cp))
    = \overline{\phi}_{ij}^{-1}(\overline{cp})
    = \overline{\phi}_{ij}^{-1}(\overline{c} \,\overline{p})
    = \overline{\phi}_{ik}^{-1}(\overline{c}) \circ \overline{\phi}_{kj}^{-1}(\overline{p})\\
    &= \Phi_{ik}(\phi_{ik}^{-1}(c)) \circ \Phi_{kj}(\phi_{kj}^{-1}(p)).
\end{align*}
By linearity, it follows that for any $f_{ik} \in \operatorname{H}_{ik}$ and $g_{kj} \in \operatorname{H}_{kj}$, $\Phi_{ij}(f_{ik} \circ g_{kj}) = \Phi_{ik}(f_{ik}) \circ \Phi_{kj}(g_{kj})$. From this, and the fact that each $\Phi_{ij}$ is a vector space isomorphism, it is clear that the mapping $\mathbf{\Phi}: A_{Q,\mathbf{d}} \longrightarrow A_{\overline{Q}, \overline{\mathbf{d}}}$ defined by $(\mathbf{\Phi}(f))_{ij} = \Phi_{ij}(f_{ij})$ is an algebra isomorphism.
\end{proof}

\begin{defn}
We say that a quiver $Q$ is \textit{connected} if its underlying graph is path-connected, and \textit{disconnected} otherwise. A \textit{connected component} of $Q$ is a maximal connected subquiver of $Q$.
\end{defn}
Note that the set of connected components of a quiver $Q$ is a partition of $Q$, and that $Q$ is disconnected if and only if it has more than one connected component. Therefore, as the following theorem shows, we can simplify the problem of finding counts for disconnected quivers:
\begin{thm}\label{disconnected quiver}
Let $Q$ be an acyclic quiver with connected components $Q^{1},\dots, Q^{n}$, and let $\mathbf{d} \in \mathbb{Z}_{\geq 0}^{Q_0}$. Then
$[Q, \mathbf{d}] = [Q^1,\mathbf{d}^1]\cdots[Q^n, \mathbf{d}^n]$, where $\mathbf{d}^k = (d_i)_{i \in Q_0^k}$ for $k=1,\dots,n$.
\end{thm}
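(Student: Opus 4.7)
The plan is to show that the endomorphism algebra $A_{Q,\mathbf{d}}$ decomposes as a direct product of the endomorphism algebras of the components, and then to observe that commuting pairs count multiplicatively across direct products of algebras.

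First, I would fix, for each $k=1,\ldots,n$, the list of vertices $v^k_1,\ldots,v^k_{t_k}$ obtained by repeating each vertex $i\in Q_0^k$ exactly $d_i$ times, so that $P_{Q,\mathbf{d}}=\bigoplus_k\bigoplus_{i=1}^{t_k}P_Q(v^k_i)$. The key structural observation is that for a vertex $v\in Q_0^k$ the indecomposable projective $P_Q(v)$ and $P_{Q^k}(v)$ coincide, since both have basis the paths in $Q$ starting at $v$, and any such path stays within the connected component containing $v$. Similarly, if $v\in Q_0^k$ and $w\in Q_0^{k'}$ with $k\ne k'$, then there is no path $w\curvearrowright v$ in $Q$, so using the presentation of hom-spaces via Remark \ref{map_def} we get $\operatorname{Hom}(P_Q(v),P_Q(w))=0$.

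Consequently, when we write $A_{Q,\mathbf{d}}$ as the matrix $[\operatorname{H}_{ij}]$ using the ordered vertex list above (grouping the vertices by component), all off-diagonal blocks between different components vanish, and the diagonal block corresponding to component $k$ is precisely $A_{Q^k,\mathbf{d}^k}$. This yields an isomorphism of $\mathbb{k}$-algebras
\begin{equation*}
A_{Q,\mathbf{d}} \;\cong\; \prod_{k=1}^n A_{Q^k,\mathbf{d}^k}.
\end{equation*}
Since the Jacobson radical of a finite product of rings is the product of the radicals (each component of $1-ab$ is invertible iff $1-a_kb_k$ is invertible in each factor, by Lemma \ref{radical lemma}), we also obtain
\begin{equation*}
\operatorname{rad}A_{Q,\mathbf{d}} \;\cong\; \prod_{k=1}^n \operatorname{rad}A_{Q^k,\mathbf{d}^k}.
\end{equation*}

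Finally, two tuples $(x_1,\ldots,x_n)$ and $(y_1,\ldots,y_n)$ in a product of algebras commute if and only if $x_ky_k=y_kx_k$ for each $k$, so the set of commuting pairs in the product is the Cartesian product of the sets of commuting pairs in each factor. Taking cardinalities gives $[Q,\mathbf{d}]=\prod_{k=1}^n[Q^k,\mathbf{d}^k]$, as required. There is no real obstacle here; the only point that deserves a moment of care is the identification $P_Q(v)=P_{Q^k}(v)$ for $v\in Q_0^k$, which is the mechanism that prevents the components from interacting in the endomorphism algebra.
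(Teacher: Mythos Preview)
Your argument is correct and is exactly the approach the paper takes: the paper's entire proof is the one-line observation that $A_{Q,\mathbf{d}}\cong A_{Q^1,\mathbf{d}^1}\times\cdots\times A_{Q^n,\mathbf{d}^n}$ as $\mathbb{k}$-algebras, and you have simply spelled out the details behind that isomorphism and the passage to radicals and commuting pairs.
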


\begin{proof}
This simply follows from the fact that $A_{Q, \mathbf{d}} \cong A_{Q^1, \mathbf{d}^1} \times \cdots \times A_{Q^n, \mathbf{d}^n}$, as $\mathbb{k}$-algebras.
\end{proof}

\subsection{More Count-Preserving Operations}\label{more reductions}

It can be the case that an operation on a quiver and summand vector can preserve the structure of the radical, while not necessarily preserving the structure of the algebra. We consider such operations in this section. The proofs for these are somewhat more tedious, though similar in nature, to those of Theorems \ref{opposite_quiver} and \ref{zero node removal}. We begin with the following definition:

\begin{defn}[Isomorphic Ideals]\label{isomorphic ideals}
Let $A$,$B$ be $\mathbb{k}$-algebras and $I \unlhd A,J \unlhd B$ be two-sided ideals. We call these ideals isomorphic, writing $I \cong J$, if there exists a bijective $\mathbb{k}$-linear map $\mathbf{\Psi}:I \longrightarrow J$ that also preserves multiplication. That is, so that $\mathbf{\Psi}(i_1 \cdot_A i_2) = \mathbf{\Psi}(i_1) \cdot_B \mathbf{\Psi}(i_2)$, for all $i_1, i_2 \in I$.
\end{defn}
\begin{rem}
It is clear that if $I \cong J$, then $I$ and $J$ have the same number of commuting pairs.
\end{rem}

Through the following theorem and corollary we show that we can change the summand vector's component for a source/sink, so long as we compensate by appropriately duplicating the arrows starting/ending at the source/sink:
\begin{equation*}
    \left[
    \begin{tikzpicture}[baseline={([yshift=-.7ex]current bounding box.center)}]
    \draw (1.48,1) node{$d$};
    \draw (0,0);
    \draw (0,2);
    \draw (3,2);
    \draw (3,0);
    \draw (0.5,1.1) node{\vdots};
    \draw (2.5,1.1) node{\vdots};
    \draw[<-] (0.15,0.15) -- (1.35,0.85);
    \draw[<-] (0.15,1.85) -- (1.35,1.15);
    \draw[->] (1.65,1.15) -- (2.85,1.85);
    \draw[->] (1.65,0.85) -- (2.85,0.15);
    \draw (0.9,0.3);
    \draw (2,0.3);
    \draw (0.9,1.71);
    \draw (2,1.71);
    
    \end{tikzpicture}
    \right]
    \rightsquigarrow
    \left[
    \begin{tikzpicture}[baseline={([yshift=-.7ex]current bounding box.center)}]
    \draw (1.5,1) node{$1$};
    \draw (0,0);
    \draw (0,2);
    \draw (3,2);
    \draw (3,0);
    \draw (0.5,1.1) node{\vdots};
    \draw (2.5,1.1) node{\vdots};
    \draw[<-] (0.15,0.15) -- (1.35,0.85);
    \draw[<-] (0.15,1.85) -- (1.35,1.15);
    \draw[->] (1.65,1.15) -- (2.85,1.85);
    \draw[->] (1.65,0.85) -- (2.85,0.15);
    \draw (0.9,0.3) node{$\#d$};
    \draw (2,0.3) node{$\#d$};
    \draw (0.9,1.71) node{$\#d$};
    \draw (2,1.71) node{$\#d$};
    \end{tikzpicture}
    \right], 
    \quad
    \left[
    \begin{tikzpicture}[baseline={([yshift=-.7ex]current bounding box.center)}]
    \draw (1.48,1) node{$d$};
    \draw (0,0);
    \draw (0,2);
    \draw (3,2);
    \draw (3,0);
    \draw (0.5,1.1) node{\vdots};
    \draw (2.5,1.1) node{\vdots};
    \draw[->] (0.15,0.15) -- (1.35,0.85);
    \draw[->] (0.15,1.85) -- (1.35,1.15);
    \draw[<-] (1.65,1.15) -- (2.85,1.85);
    \draw[<-] (1.65,0.85) -- (2.85,0.15);
    \draw (0.9,0.3);
    \draw (2,0.3);
    \draw (0.9,1.71);
    \draw (2,1.71);
    
    \end{tikzpicture}
    \right]
    \rightsquigarrow
    \left[
    \begin{tikzpicture}[baseline={([yshift=-.7ex]current bounding box.center)}]
    \draw (1.5,1) node{$1$};
    \draw (0,0);
    \draw (0,2);
    \draw (3,2);
    \draw (3,0);
    \draw (0.5,1.1) node{\vdots};
    \draw (2.5,1.1) node{\vdots};
    \draw[->] (0.15,0.15) -- (1.35,0.85);
    \draw[->] (0.15,1.85) -- (1.35,1.15);
    \draw[<-] (1.65,1.15) -- (2.85,1.85);
    \draw[<-] (1.65,0.85) -- (2.85,0.15);
    \draw (0.9,0.3) node{$\#d$};
    \draw (2,0.3) node{$\#d$};
    \draw (0.9,1.71) node{$\#d$};
    \draw (2,1.71) node{$\#d$};
    \end{tikzpicture}
    \right],
\end{equation*}
where ``$\#d$'' denotes that there are $d$ copies of the underlying, drawn arrow (``$\#0$'' means that the underlying, drawn arrow is not in the quiver).

\begin{thm}[Source Conversion]\label{source conversion}
Suppose that $Q$ is acyclic, $v \in Q_0$ is a source, and that $\mathbf{d} \in \mathbb{Z}_{\geq 0}^{Q_0}$ with $d_v = d$. Let $\overline{Q}_0 := Q_0$, $\overline{\mathbf{d}} \in \mathbb{Z}_{\geq 0}^{\overline{Q}_0}$ be such that $\overline{d}_v = 1$ and $\overline{d}_i = d_i$ for all $i \neq v$. Furthermore, let\\ $\mathcal{A} := \{\alpha \in Q_1 \; | \; s(\alpha) = v\}$, $\overline{\mathcal{A}} := \{\alpha^1, \dots, \alpha^d \; | \; \alpha \in \mathcal{A}\}$ consist of $d$-duplicates of each $\alpha \in \mathcal{A}$, and \\$\overline{Q}_1 := Q_1 \cup \overline{\mathcal{A}} \setminus \mathcal{A}$, giving the quiver $\overline{Q} := (\overline{Q}_0, \overline{Q}_1)$. Then $\operatorname{rad}A_{Q,\mathbf{d}} \cong \operatorname{rad}A_{\overline{Q}, \overline{\mathbf{d}}}$. In particular, \\$[Q, \mathbf{d}] = [\overline{Q}, \overline{\mathbf{d}}]$.
\end{thm}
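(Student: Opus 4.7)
The plan is to mimic the strategy of Theorems~\ref{opposite_quiver} and \ref{zero node removal}, but to work only at the level of radicals and produce an isomorphism of ideals $\mathbf{\Psi}:\operatorname{rad}A_{Q,\mathbf{d}}\to\operatorname{rad}A_{\overline{Q},\overline{\mathbf{d}}}$ in the sense of Definition~\ref{isomorphic ideals}; the remark after that definition then yields $[Q,\mathbf{d}]=[\overline{Q},\overline{\mathbf{d}}]$. First I order the summands so that $P_{Q,\mathbf{d}}=\bigoplus_{i=1}^{t}P_Q(v_i)$ with $v_1=\cdots=v_d=v$ and $v_{d+1},\dots,v_t\ne v$, and write $P_{\overline{Q},\overline{\mathbf{d}}}=\bigoplus_{i=1}^{t-d+1}P_{\overline{Q}}(w_i)$ with $w_1=v$ and $w_i=v_{i+d-1}$ for $i\geq 2$. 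Remark~\ref{rad_map_def} then identifies each block $\operatorname{R}_{ij}$ and $\overline{\operatorname{R}}_{ij}$ with the vector space spanned by non-constant paths between the appropriate vertices of $Q$ or $\overline{Q}$.

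The key combinatorial observation is that because $v$ is a source: (i) no non-constant path in $Q$ or $\overline{Q}$ ends at $v$, so every block with column vertex equal to $v$ vanishes in both radicals; and (ii) every non-constant path in $Q$ starting at $v$ has a unique factorisation $\alpha\gamma$ with $\alpha\in\mathcal{A}$ and $\gamma$ a (possibly constant) path not starting at $v$, while every non-constant path in $\overline{Q}$ starting at $v$ factors uniquely as $\alpha^{k}\gamma$ with $\alpha\in\mathcal{A}$, $k\in\{1,\dots,d\}$ and $\gamma$ as before. Paths in $Q$ and $\overline{Q}$ that avoid $v$ coincide, since only arrows out of $v$ were altered. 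This gives a bijection between the basis of $\operatorname{rad}A_{Q,\mathbf{d}}$ and the basis of $\operatorname{rad}A_{\overline{Q},\overline{\mathbf{d}}}$, which I promote to a linear isomorphism $\mathbf{\Psi}$ by the rules
\begin{equation*}
\psi_{ij}^{-1}(c)\longmapsto\overline{\psi}_{i-d+1,\,j-d+1}^{-1}(c)\quad(i,j>d),\qquad
\psi_{ij}^{-1}(\alpha\gamma)\longmapsto\overline{\psi}_{1,\,j-d+1}^{-1}(\alpha^{i}\gamma)\quad(i\leq d<j),
\end{equation*}
all other blocks being zero on both sides. Crucially the label $i\in\{1,\dots,d\}$ of the $v$-copy in $Q$ becomes the label of the duplicated first arrow in $\overline{Q}$.

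It remains to verify $\mathbf{\Psi}(xy)=\mathbf{\Psi}(x)\mathbf{\Psi}(y)$ for $x,y\in\operatorname{rad}A_{Q,\mathbf{d}}$. Reducing to basis elements and applying Proposition~\ref{basis_composition}(2), a typical summand $\psi_{ik}^{-1}(c)\circ\psi_{kj}^{-1}(p)$ equals $\psi_{ij}^{-1}(cp)$ whenever both factors are nonzero. The verification splits by whether the indices $i$, $k$, $j$ lie in $\{1,\dots,d\}$: if the intermediate index $k$ or the column index $j$ is $\leq d$ the relevant blocks vanish (no non-constant paths end at the source $v$), and the same vanishing occurs in $\overline{Q}$; if none of the indices is $\leq d$ the composition is untouched by $\mathbf{\Psi}$; the only substantive case is $i\leq d<k,j$, where the first path factors as $\alpha\gamma$ and I must check that the identity $\overline{\alpha^{i}\gamma\cdot p}=\alpha^{i}\cdot(\gamma p)$ in $\overline{Q}$ matches the image rule, which it does by construction.

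The main obstacle is simply the bookkeeping of this last case: one must be sure that the source label $i$ attaches consistently to the first arrow $\alpha$ regardless of how the path is subsequently cut by composition, and this works precisely because $v$ being a source forces every occurrence of a $v$-copy among the row indices to coincide with the \emph{starting} vertex of the underlying path, never an intermediate or ending one. Once this is checked, $\mathbf{\Psi}$ is an ideal isomorphism and the equality of counts follows.
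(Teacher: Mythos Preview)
Your proposal is correct and follows essentially the same approach as the paper: both arguments build a block-wise linear bijection between the radicals by matching the copy-index $i\in\{1,\dots,d\}$ of the source in $Q$ with the duplicate-index on the first arrow $\alpha^{i}$ in $\overline{Q}$, then verify multiplicativity via Proposition~\ref{basis_composition} with the same case split (the only nontrivial case being $i\leq d<k,j$). The paper's map happens to go in the opposite direction $\operatorname{rad}A_{\overline{Q},\overline{\mathbf{d}}}\to\operatorname{rad}A_{Q,\mathbf{d}}$ and singles out the degenerate case $d=0$ explicitly, but these are cosmetic differences.
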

\begin{proof}
The claim is clear if $d=0$. Hence we assume that $d \geq 1$. We write $P_{Q, \mathbf{d}}$ as $\bigoplus_{i = 1}^t P_Q(v_i)$, such that $v_i = v$ for $i = 1, \dots, d$ (and necessarily $v_i \neq v$ for all $i > d$). Since $Q_0 = \overline{Q}_0$ and $d_i = \overline{d}_i$ for all $i \neq v$, we can consistently write $P_{\overline{Q}, \overline{\mathbf{d}}}$ as $\bigoplus_{i = 1,d+1,\dots,t} P_{\overline{Q}}(v_i)$. Therefore
\begin{align*}
    \operatorname{rad}A_{Q,\mathbf{d}} 
    &= [\operatorname{Rad}(P_Q(v_j), P_Q(v_i))]_{i,j = 1,\dots,t} 
    = [\operatorname{R}_{ij}]_{i,j = 1,\dots,t} \quad \text{and}\\
    \operatorname{rad}A_{\overline{Q},\overline{\mathbf{d}}} 
    &= [\operatorname{Rad}(P_{\overline{Q}}(v_j), P_{\overline{Q}}(v_i))]_{i,j = 1,d+1,\dots,t} 
    = [\overline{\operatorname{R}}_{ij}]_{i,j = 1,d+1,\dots,t}.
\end{align*}

For each relevant pair $i,j$, define the vector space isomorphisms
\begin{align*}
    \psi_{ij}: \operatorname{R}_{ij} \; \stackrel{\sim}\longrightarrow \; \langle c \; | \; c:v_i \stackrel{\text{n.c.}}\curvearrowright v_j \in Q \rangle \quad \text{and} \quad
    \overline{\psi}_{ij}: \overline{\operatorname{R}}_{ij} \; \stackrel{\sim}\longrightarrow \; \langle c \; | \; c:v_i \stackrel{\text{n.c.}}\curvearrowright v_j \in \overline{Q} \rangle
\end{align*}
as in Remark \ref{rad_map_def}, giving bases $\{\psi_{ij}^{-1}(c) \; | \; c:v_i \stackrel{\text{n.c.}}\curvearrowright v_j \in Q\}$ of $\operatorname{R}_{ij}$ and $\{\overline{\psi}_{ij}^{-1}(c) \; | \; c:v_i \stackrel{\text{n.c.}}\curvearrowright v_j \in \overline{Q}\}$ of $\overline{\operatorname{R}}_{ij}$. Crucial for later is the observation that, for $v_i, v_j \neq v$ (i.e. for $i,j > d$), $Q$ and $\overline{Q}$ have the same paths $v_i \curvearrowright v_j$ (and thereby non-constant paths $v_i \stackrel{\text{n.c.}}\curvearrowright v_j$).

\begin{itemize}
    \item Understanding $\operatorname{rad}A_{Q, \mathbf{d}}$:
    
    For $j \leq d$, $\operatorname{R}_{ij} = 0$ since there are no non-constant paths $v_i \stackrel{\text{n.c.}}\curvearrowright v$ in $Q$. For $j > d$, we define 
    \begin{equation*}
        \operatorname{R}_{*j} :=
        \begin{bmatrix}
            \operatorname{R}_{1j}\\
            \vdots\\
            \operatorname{R}_{dj}
        \end{bmatrix}
        =
        \operatorname{Rad}(P_Q(v_j), P_Q(v))^d,
    \end{equation*}
    and, for $f_{*j} = [f_{1j}, \dots, f_{dj}]^{\top} \in \operatorname{R}_{*j}$ and $g_{jk} \in \operatorname{R}_{jk}$ with $j,k > d$, define composition as
    \begin{equation}\label{star_composition}
        f_{*j} \circ g_{jk} :=
        \begin{bmatrix}
            f_{1j} \circ g_{jk}\\
            \vdots\\
            f_{dj} \circ g_{jk}
        \end{bmatrix}.
    \end{equation}
    With this we can write
    \begin{equation*}
        \operatorname{rad}A_{Q, \mathbf{d}} =
        \left[
        \begin{array}{c|ccc}
            O_{d\times d} & \operatorname{R}_{*,d+1} & \cdots & \operatorname{R}_{*,t} \\
            \hline
            & \operatorname{R}_{d+1,d+1} & \cdots & \operatorname{R}_{d+1,t}\\
            O_{(t-d) \times d} & \vdots & \ddots & \vdots\\
            & \operatorname{R}_{t,d+1} & \cdots & \operatorname{R}_{t,t}
        \end{array}
        \right]
        \unlhd A_{Q, \mathbf{d}},
    \end{equation*}
    with addition and scalar multiplication in $\operatorname{rad}A_{Q, \mathbf{d}}$ being componentwise, and multiplication being given for $f,g \in \operatorname{rad}A_{Q, \mathbf{d}}$ as
    \begin{align}\label{radA_Q,d composition}
        (f \cdot_{A_{Q, \mathbf{d}}} g)_{*j} = \sum_{k=d+1}^t f_{*k} \circ g_{kj}, \; \text{for $j>d$}, \; \text{and} \;
        (f \cdot_{A_{Q, \mathbf{d}}} g)_{ij} = \sum_{k=d+1}^t f_{ik} \circ g_{kj}, \; \text{for $i,j > d$}.
    \end{align}
    
    \item Understanding $\operatorname{rad}A_{\overline{Q}, \overline{\mathbf{d}}}$:
    
    For $j = 1$ (equivalently here, for $j\leq d$), $\overline{\operatorname{R}}_{ij} = 0$, since there are no non-constant paths $v_i \stackrel{\text{n.c.}}\curvearrowright v$ in $\overline{Q}$. Therefore,
    \begin{equation*}
        \operatorname{rad}A_{\overline{Q}, \overline{\mathbf{d}}} =
        \left[
        \begin{array}{c|ccc}
            0 & \overline{\operatorname{R}}_{1,d+1} & \cdots & \overline{\operatorname{R}}_{1,t}\\
            \hline
            0 & \overline{\operatorname{R}}_{d+1,d+1} & \cdots & \overline{\operatorname{R}}_{d+1,t}\\
            \vdots & \vdots & \ddots & \vdots\\
            0 & \overline{\operatorname{R}}_{t,d+1} & \cdots & \overline{\operatorname{R}}_{t,t}
        \end{array}
        \right]
        \unlhd A_{\overline{Q}, \overline{\mathbf{d}}},
    \end{equation*}
    with addition and scalar multiplication in $\operatorname{rad}A_{\overline{Q}, \overline{\mathbf{d}}}$ being componentwise, and multiplication being given for $\overline{f},\overline{g} \in \operatorname{rad}A_{\overline{Q}, \overline{\mathbf{d}}}$ as
    \begin{equation}\label{radA_Qbar,dbar composition}
        (\overline{f} \cdot_{A_{\overline{Q}, \overline{\mathbf{d}}}} \overline{g})_{ij} = \sum_{k=d+1}^t \overline{f}_{ik} \circ \overline{g}_{kj}, \; \text{for $j > d$ (i.e. for $j > 1$)}.
    \end{equation}
\end{itemize}
Before constructing a mapping between the radicals we define mappings $\Psi_{ij}$ (with domains $\overline{\operatorname{R}}_{ij}$) for each $j > d$ and all $i$ (we need not consider those for $j\leq d$):
\begin{itemize}
    \item For $i=1$ (i.e. $i \leq d$) and $j>d$:
    
    Note that $v_j \neq v$. If $c = \alpha \beta_1 \cdots \beta_r$ is a non-constant path $v\stackrel{\text{n.c.}}\curvearrowright v_j$ in $Q$, then $\alpha \in \mathcal{A}$ and there is a set $\{c^{\gamma}:=\alpha^{\gamma}\beta_1\cdots\beta_r \; | \; \gamma=1,\dots,d\}$ of corresponding non-constant paths $v\stackrel{\text{n.c.}}\curvearrowright v_j$ in $\overline{Q}$, obtained by replacing the first arrow $\alpha$ with its $d$-duplicates in $\overline{\mathcal{A}}$. Furthermore, \\$\{c^{\gamma} \; | \; c:v\stackrel{\text{n.c.}}\curvearrowright v_j \in Q, \gamma = 1,\dots,d\}$ is the complete set of non-constant paths $v\stackrel{\text{n.c.}}\curvearrowright v_j$ in $\overline{Q}$. Hence we can define vector space isomorphisms by specifying their actions on basis elements:
    \begin{align*}
        \Psi_{1j}:\overline{\operatorname{R}}_{1j}\stackrel{\sim}{\longrightarrow}\operatorname{R}_{*j}; \quad
        \overline{\psi}_{1j}^{-1}(c^{\gamma}) \longmapsto
        [0,\dots,0,\psi_{\gamma j}^{-1}(c),0,\dots,0]^{\top}
        =
        [\delta_{\gamma l}\psi_{\gamma j}^{-1}(c)]_{l = 1,\dots,d},
    \end{align*}
    where $\delta_{\gamma l}$ is the Kronecker delta.

    \item For $i,j>d$:
    
    $Q$ and $\overline{Q}$ have the same non-constant paths $v_i \stackrel{\text{n.c.}}\curvearrowright v_j$, so we can once again define vector space isomorphisms by specifying their actions on basis elements:
    \begin{equation*}
        \Psi_{ij}: \overline{\operatorname{R}}_{ij} \stackrel{\sim}{\longrightarrow} \operatorname{R}_{ij}; \quad \overline{\psi}_{ij}^{-1}(c) \longmapsto \psi_{ij}^{-1}(c).
    \end{equation*}
\end{itemize}

We can now define the map $\mathbf{\Psi}:\operatorname{rad}A_{\overline{Q}, \overline{\mathbf{d}}} \longrightarrow \operatorname{rad}A_{Q, \mathbf{d}}$ to be such that
$(\mathbf{\Psi}(\overline{f}))_{*j} = \Psi_{1j}(\overline{f}_{1j})$ 
for $j>d$,
and 
$(\mathbf{\Psi}(\overline{f}))_{ij} = \Psi_{ij}(\overline{f}_{ij})$
for $i,j>d$. $\mathbf{\Psi}$ is a vector space isomorphism, since each $\Psi_{ij}$ is. It remains to show that $\mathbf{\Psi}$ preserves multiplication.

First we show that, for all $j,k>d$ and any $i$, if $c:v_i \stackrel{\text{n.c.}}\curvearrowright v_k$ and $p:v_k \stackrel{\text{n.c.}}\curvearrowright v_j$ are non-constant paths in $\overline{Q}$, then $\Psi_{ij}(\overline{\psi}_{ik}^{-1}(c) \circ \overline{\psi}_{kj}^{-1}(p)) = \Psi_{ik}(\overline{\psi}_{ik}^{-1}(c)) \circ \Psi_{kj}(\overline{\psi}_{kj}^{-1}(p))$:
\begin{itemize}
    \item For $i > d$:
    
    Let $c:v_i \stackrel{\text{n.c.}}\curvearrowright v_k$ and $p:v_k \stackrel{\text{n.c.}}\curvearrowright v_j$ be non-constant paths in $\overline{Q}$. Then
    \begin{align*}
        \Psi_{ij}(\overline{\psi}_{ik}^{-1}(c) \circ \overline{\psi}_{kj}^{-1}(p))
        &\stackrel{\text{Prop. \ref{basis_composition}}}= \Psi_{ij}(\overline{\psi}_{ij}^{-1}(cp))
        = \psi_{ij}^{-1}(cp)
        \stackrel{\text{Prop. \ref{basis_composition}}}= \psi_{ik}^{-1}(c) \circ \psi_{kj}^{-1}(p)\\
        &= \Psi_{ik}(\overline{\psi}_{ik}^{-1}(c)) \circ \Psi_{kj}(\overline{\psi}_{kj}^{-1}(p)).
    \end{align*}
    
    \item For $i = 1$, i.e. $i \leq d$:
    
    Let $c^{\gamma}:v \stackrel{\text{n.c.}}\curvearrowright v_k$ and $p:v_k \stackrel{\text{n.c.}}\curvearrowright v_j$ be non-constant paths in $\overline{Q}$. Then for each $l=1,\dots,d$ we have
    \begin{align*}
        [\Psi_{1j}(\overline{\psi}_{1k}^{-1}(c^{\gamma}) \circ \overline{\psi}_{kj}^{-1}(p))]_l
        &\stackrel{\text{Prop. \ref{basis_composition}}}= [\Psi_{1j}(\overline{\psi}_{1j}^{-1}(c^{\gamma}p))]_l
        = [\Psi_{1j}(\overline{\psi}_{1j}^{-1}((cp)^{\gamma}))]_l
        = \delta_{\gamma l} \psi_{\gamma j}^{-1}(cp)\\
        &\stackrel{\text{Prop. \ref{basis_composition}}}= \delta_{\gamma l} \psi_{\gamma k}^{-1}(c) \circ \psi_{kj}^{-1}(p)
        \stackrel{(\ref{star_composition})}= [\Psi_{1k}(\overline{\psi}_{1k}^{-1}(c^{\gamma})) \circ \Psi_{kj}(\overline{\psi}_{kj}^{-1}(p))]_l.
    \end{align*}
    Therefore,
    \begin{equation*}
        \Psi_{1j}(\overline{\psi}_{1k}^{-1}(c^{\gamma}) \circ \overline{\psi}_{kj}^{-1}(p))
        = \Psi_{1k}(\overline{\psi}_{1k}^{-1}(c^{\gamma})) \circ \Psi_{kj}(\overline{\psi}_{kj}^{-1}(p)).
    \end{equation*}
\end{itemize}
It now follows by linearity that, for all $j,k>d$ and any $i$, for all $\overline{f}_{ik} \in \overline{\operatorname{R}}_{ik}$ and $\overline{g}_{kj} \in \overline{\operatorname{R}}_{kj}$ we have
\begin{equation}\label{distribution}
    \Psi_{ij}(\overline{f}_{ik} \circ \overline{g}_{kj}) = \Psi_{ik}(\overline{f}_{ik}) \circ \Psi_{kj}(\overline{g}_{kj}).
\end{equation}

 Finally, let $\overline{f},\overline{g} \in \operatorname{rad}A_{\overline{Q}, \overline{\mathbf{d}}}$. Then
\begin{itemize}
    \item For $j>d$:
    \begin{align*}
        (\mathbf{\Psi}(\overline{f} \cdot_{A_{\overline{Q}, \overline{\mathbf{d}}}} \overline{g}))_{*j}
        &= \Psi_{1j}((\overline{f} \cdot_{A_{\overline{Q}, \overline{\mathbf{d}}}} \overline{g})_{1j})
        \stackrel{(\ref{radA_Qbar,dbar composition})}= \Psi_{1j}(\sum_{k=d+1}^t \overline{f}_{1k} \circ \overline{g}_{kj})
        \stackrel{(\ref{distribution})}= \sum_{k=d+1}^t \Psi_{1k}(\overline{f}_{1k}) \circ \Psi_{kj}(\overline{g}_{kj})\\
        &= \sum_{k=d+1}^t (\mathbf{\Psi}(\overline{f}))_{*k} \circ (\mathbf{\Psi}(\overline{g}))_{kj}
        \stackrel{(\ref{radA_Q,d composition})}= (\mathbf{\Psi}(\overline{f}) \cdot_{A_{Q, \mathbf{d}}} \mathbf{\Psi}(\overline{g}))_{*j}.
    \end{align*}

    \item Similarly, for $i,j>d$:
    \begin{align*}
        (\mathbf{\Psi}(\overline{f} \cdot_{A_{\overline{Q}, \overline{\mathbf{d}}}} \overline{g}))_{ij}
        &= \Psi_{ij}((\overline{f} \cdot_{A_{\overline{Q}, \overline{\mathbf{d}}}} \overline{g})_{ij})
        \stackrel{(\ref{radA_Qbar,dbar composition})}= \Psi_{ij}(\sum_{k=d+1}^t \overline{f}_{ik} \circ \overline{g}_{kj})
        \stackrel{(\ref{distribution})}= \sum_{k=d+1}^t \Psi_{ik}(\overline{f}_{ik}) \circ \Psi_{kj}(\overline{g}_{kj})\\
        &= \sum_{k=d+1}^t (\mathbf{\Psi}(\overline{f}))_{ik} \circ (\mathbf{\Psi}(\overline{g}))_{kj}
        \stackrel{(\ref{radA_Q,d composition})}= (\mathbf{\Psi}(\overline{f}) \cdot_{A_{Q, \mathbf{d}}} \mathbf{\Psi}(\overline{g}))_{ij}.
    \end{align*}
\end{itemize}
Therefore $\mathbf{\Psi}(\overline{f} \cdot_{A_{\overline{Q}, \overline{\mathbf{d}}}} \overline{g}) = \mathbf{\Psi}(\overline{f}) \cdot_{A_{Q, \mathbf{d}}} \mathbf{\Psi}(\overline{g})$, and the result follows.
\end{proof}

\begin{cor}[Sink Conversion]\label{sink conversion}
Suppose that $Q$ is acyclic, $v \in Q_0$ is a sink, and that $\mathbf{d} \in \mathbb{Z}_{\geq 0}^{Q_0}$ with $d_v = d$. Let $\overline{Q}_0 := Q_0$, $\overline{\mathbf{d}} \in \mathbb{Z}_{\geq 0}^{\overline{Q}_0}$ be such that $\overline{d}_v = 1$ and $\overline{d}_i = d_i$ for all $i \neq v$. Furthermore, let\\ $\mathcal{A} := \{\alpha \in Q_1 \; | \; t(\alpha) = v\}$, $\overline{\mathcal{A}} := \{\alpha^1, \dots, \alpha^d \; | \; \alpha \in \mathcal{A}\}$ consist of $d$-duplicates of each $\alpha \in \mathcal{A}$, and \\$\overline{Q}_1 := Q_1 \cup \overline{\mathcal{A}} \setminus \mathcal{A}$, giving the quiver $\overline{Q} := (\overline{Q}_0, \overline{Q}_1)$. Then $\operatorname{rad}A_{Q,\mathbf{d}} \cong \operatorname{rad}A_{\overline{Q}, \overline{\mathbf{d}}}$. In particular, \\$[Q, \mathbf{d}] = [\overline{Q}, \overline{\mathbf{d}}]$.
\end{cor}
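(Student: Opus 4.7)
The plan is to reduce the sink case to the source case handled in Theorem \ref{source conversion} by using arrow reversal. First I would observe that $v$ is a sink in $Q$ if and only if $v$ is a source in $Q^{\operatorname{op}}$, and that the map $\alpha \mapsto \alpha^{\operatorname{op}}$ carries $\mathcal{A} = \{\alpha \in Q_1 \mid t(\alpha) = v\}$ bijectively onto the set of arrows of $Q^{\operatorname{op}}$ starting at $v$. Applying Source Conversion to $(Q^{\operatorname{op}}, \mathbf{d})$ at the source $v$ therefore produces a quiver in which every $\alpha^{\operatorname{op}}$ gets replaced by $d$ duplicates; a direct bookkeeping check shows that this quiver is precisely $\overline{Q}^{\operatorname{op}}$ and the summand vector is $\overline{\mathbf{d}}$. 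Thus Theorem \ref{source conversion} supplies an ideal isomorphism $\operatorname{rad} A_{Q^{\operatorname{op}}, \mathbf{d}} \cong \operatorname{rad} A_{\overline{Q}^{\operatorname{op}}, \overline{\mathbf{d}}}$.

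The remaining task is to transfer this isomorphism across the opposite construction. Theorem \ref{opposite_quiver} gives $A_{Q, \mathbf{d}}^{\operatorname{op}} \cong A_{Q^{\operatorname{op}}, \mathbf{d}}$ as $\mathbb{k}$-algebras, and the proof of Corollary \ref{cor:opposite} shows $\operatorname{rad}(A^{\operatorname{op}}) = \operatorname{rad}(A)$ as sets (with reversed multiplication). Combining these two facts, the identity on the underlying set composed with the algebra isomorphism of Theorem \ref{opposite_quiver} yields a $\mathbb{k}$-linear \emph{anti}-isomorphism of ideals $\operatorname{rad} A_{Q, \mathbf{d}} \to \operatorname{rad} A_{Q^{\operatorname{op}}, \mathbf{d}}$, and similarly one obtains an anti-isomorphism $\operatorname{rad} A_{\overline{Q}^{\operatorname{op}}, \overline{\mathbf{d}}} \to \operatorname{rad} A_{\overline{Q}, \overline{\mathbf{d}}}$. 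I would then compose
\begin{equation*}
\operatorname{rad} A_{Q, \mathbf{d}} \xrightarrow{\text{anti}} \operatorname{rad} A_{Q^{\operatorname{op}}, \mathbf{d}} \xrightarrow{\sim} \operatorname{rad} A_{\overline{Q}^{\operatorname{op}}, \overline{\mathbf{d}}} \xrightarrow{\text{anti}} \operatorname{rad} A_{\overline{Q}, \overline{\mathbf{d}}},
\end{equation*}
obtaining a $\mathbb{k}$-linear bijection whose multiplicative behaviour is \emph{anti} $\circ$ \emph{iso} $\circ$ \emph{anti}; a one-line check shows this composite preserves multiplication, yielding the desired ideal isomorphism and hence the count equality $[Q, \mathbf{d}] = [\overline{Q}, \overline{\mathbf{d}}]$.

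The main obstacle is essentially just bookkeeping: one must cleanly verify that Source Conversion applied inside $Q^{\operatorname{op}}$ produces $\overline{Q}^{\operatorname{op}}$ rather than a different arrow-duplication pattern, which amounts to matching the auxiliary sets $\overline{\mathcal{A}}$ from the two statements via $\alpha \mapsto \alpha^{\operatorname{op}}$. If one prefers to bypass the opposite-algebra juggling, an equally valid approach would be to repeat the proof of Theorem \ref{source conversion} verbatim with sources replaced by sinks, constructing analogous basis isomorphisms $\Psi_{ij}$ on the radical blocks indexed by the $d$ copies of a sink and verifying compatibility with composition; this is more tedious but entirely parallel, and the short opposite-based route seems preferable.
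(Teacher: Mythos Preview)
Your proposal is correct and follows essentially the same route as the paper: reduce to the source case by passing to $Q^{\operatorname{op}}$, apply Theorem~\ref{source conversion} there (after checking $\overline{Q^{\operatorname{op}}}=(\overline{Q})^{\operatorname{op}}$), and then pass back via Theorem~\ref{opposite_quiver}. The paper records this as a chain of isomorphisms through opposite ideals, which is just a notational repackaging of your anti--iso--anti composition.
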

\begin{proof}
Since $v$ is a source in the opposite quiver $Q^{\operatorname{op}}$, we denote by $\overline{Q^{\operatorname{op}}}$ the quiver given as in the statement of Theorem \ref{source conversion}. Note that $\overline{Q^{\operatorname{op}}} = (\overline{Q})^{\operatorname{op}}$ and that $\overline{\mathbf{d}}$ is the same regardless of whether it is understood as in the statement of Theorem \ref{source conversion} or this corollary. Thus,
\begin{align*}
    \operatorname{rad}A_{Q,\mathbf{d}} = (\operatorname{rad}A_{Q, \mathbf{d}}^{\operatorname{op}})^{\operatorname{op}}
    \stackrel{\text{Thm \ref{opposite_quiver}}}\cong (\operatorname{rad}A_{Q^{\operatorname{op}}, \mathbf{d}})^{\operatorname{op}}
    \stackrel{\text{Thm \ref{source conversion}}}\cong (\operatorname{rad}A_{\overline{Q^{\operatorname{op}}}, \overline{\mathbf{d}}})^{\operatorname{op}}\\
    = (\operatorname{rad}A_{(\overline{Q})^{\operatorname{op}}, \overline{\mathbf{d}}})^{\operatorname{op}}
    \stackrel{\text{Thm \ref{opposite_quiver}}}\cong (\operatorname{rad}A_{\overline{Q}, \overline{\mathbf{d}}}^{\operatorname{op}})^{\operatorname{op}}
    = \operatorname{rad}A_{\overline{Q}, \overline{\mathbf{d}}}.
\end{align*}
\end{proof}

\begin{rem} \label{on sink conversion}
We can visualize the argument of the proof of Corollary \ref{sink conversion}, where we apply the operations of arrow reversal (Theorem \ref{opposite_quiver}), source conversion (Theorem \ref{source conversion}), and then arrow reversal again:
\begin{align*}
    \left[
    \begin{tikzpicture}[baseline={([yshift=-.7ex]current bounding box.center)}]
    \draw (1.48,1) node{$d$};
    \draw (0,0);
    \draw (0,2);
    \draw (3,2);
    \draw (3,0);
    \draw (0.5,1.1) node{\vdots};
    \draw (2.5,1.1) node{\vdots};
    \draw[->] (0.15,0.15) -- (1.35,0.85);
    \draw[->] (0.15,1.85) -- (1.35,1.15);
    \draw[<-] (1.65,1.15) -- (2.85,1.85);
    \draw[<-] (1.65,0.85) -- (2.85,0.15);
    \draw (0.9,0.3);
    \draw (2,0.3);
    \draw (0.9,1.71);
    \draw (2,1.71);
    \end{tikzpicture}
    \right]
    \rightsquigarrow
    \left[
    \begin{tikzpicture}[baseline={([yshift=-.7ex]current bounding box.center)}]
    \draw (1.48,1) node{$d$};
    \draw (0,0);
    \draw (0,2);
    \draw (3,2);
    \draw (3,0);
    \draw (0.5,1.1) node{\vdots};
    \draw (2.5,1.1) node{\vdots};
    \draw[<-] (0.15,0.15) -- (1.35,0.85);
    \draw[<-] (0.15,1.85) -- (1.35,1.15);
    \draw[->] (1.65,1.15) -- (2.85,1.85);
    \draw[->] (1.65,0.85) -- (2.85,0.15);
    \draw (0.9,0.3);
    \draw (2,0.3);
    \draw (0.9,1.71);
    \draw (2,1.71);
    \end{tikzpicture}
    \right]
    \rightsquigarrow
    \left[
    \begin{tikzpicture}[baseline={([yshift=-.7ex]current bounding box.center)}]
    \draw (1.5,1) node{$1$};
    \draw (0,0);
    \draw (0,2);
    \draw (3,2);
    \draw (3,0);
    \draw (0.5,1.1) node{\vdots};
    \draw (2.5,1.1) node{\vdots};
    \draw[<-] (0.15,0.15) -- (1.35,0.85);
    \draw[<-] (0.15,1.85) -- (1.35,1.15);
    \draw[->] (1.65,1.15) -- (2.85,1.85);
    \draw[->] (1.65,0.85) -- (2.85,0.15);
    \draw (0.9,0.3) node{$\#d$};
    \draw (2,0.3) node{$\#d$};
    \draw (0.9,1.71) node{$\#d$};
    \draw (2,1.71) node{$\#d$};
    \end{tikzpicture}
    \right]
    \rightsquigarrow
    \left[
    \begin{tikzpicture}[baseline={([yshift=-.7ex]current bounding box.center)}]
    \draw (1.5,1) node{$1$};
    \draw (0,0);
    \draw (0,2);
    \draw (3,2);
    \draw (3,0);
    \draw (0.5,1.1) node{\vdots};
    \draw (2.5,1.1) node{\vdots};
    \draw[->] (0.15,0.15) -- (1.35,0.85);
    \draw[->] (0.15,1.85) -- (1.35,1.15);
    \draw[<-] (1.65,1.15) -- (2.85,1.85);
    \draw[<-] (1.65,0.85) -- (2.85,0.15);
    \draw (0.9,0.3) node{$\#d$};
    \draw (2,0.3) node{$\#d$};
    \draw (0.9,1.71) node{$\#d$};
    \draw (2,1.71) node{$\#d$};
    \end{tikzpicture}
    \right]
\end{align*}
Of course we must keep in mind that while the radical's structure is unchanged under source conversion, its structure is \textit{opposed} under reversal of all arrows. However, doing so twice (as above) --- or more generally an even number of times --- leaves the radical's structure unchanged, which is what we wanted to show.
\end{rem}

We now introduce what are perhaps our main results, which state that we can split a source/sink, so long as we preserve the arrows starting/ending at that source/sink. We split sources as follows:
\begin{equation}\label{source splitting picture}
    \left[
    \begin{tikzpicture}[baseline={([yshift=-.7ex]current bounding box.center)}]
    \draw (1.48,1) node{$d$};
    \draw (0,0);
    \draw (0,2);
    \draw (3,2);
    \draw (3,0);
    \draw (0.5,1.1) node{\vdots};
    \draw (2.5,1.1) node{\vdots};
    \draw[<-] (0.15,0.15) -- (1.35,0.85);
    \draw[<-] (0.15,1.85) -- (1.35,1.15);
    \draw[->] (1.65,1.15) -- (2.85,1.85);
    \draw[->] (1.65,0.85) -- (2.85,0.15);
    \draw (0.9,0.3);
    \draw (2,0.3);
    \draw (0.9,1.71);
    \draw (2,1.71);
    
    \end{tikzpicture}
    \right]
    \rightsquigarrow
    \left[
    \begin{tikzpicture}[baseline={([yshift=-.7ex]current bounding box.center)}]
    \draw (1.5,1) node{$d$};
    \draw (2.5,1) node{$d$};
    \draw (0,0);
    \draw (0,2);
    \draw (3,2);
    \draw (3,0);
    \draw (0.5,1.1) node{\vdots};
    \draw (3.5,1.1) node{\vdots};
    \draw[<-] (0.15,0.15) -- (1.35,0.85);
    \draw[<-] (0.15,1.85) -- (1.35,1.15);
    \draw[->] (2.65,1.15) -- (3.85,1.85);
    \draw[->] (2.65,0.85) -- (3.85,0.15);
    \draw (0.9,0.3);
    \draw (3,0.3);
    \draw (0.9,1.71);
    \draw (3,1.71);
    \end{tikzpicture}
    \;
    \right]
\end{equation}

\begin{thm}[Source Splitting]\label{source splitting}
Suppose that $Q$ is acyclic, $v\in Q_0$ is a source, and $\mathbf{d} \in \mathbb{Z}_{\geq 0}^{Q_0}$ with $d_v = d$. Let $\{\mathcal{A},\mathcal{B}\}$ be a partition of the set $\{\alpha \in Q_1 \; | \; s(\alpha) = v\}$ of arrows with source vertex $v$. Define the sets $\overline{Q}_0 := Q_0\cup\{v^{\mathcal{A}}, v^{\mathcal{B}}\} \setminus \{v\}$, $\overline{\mathcal{A}} := \{v^{\mathcal{A}} \stackrel{\overline{\alpha}}\longrightarrow t(\alpha) \; | \; \alpha \in \mathcal{A}\}$, $\overline{\mathcal{B}} := \{v^{\mathcal{B}} \stackrel{\overline{\beta}}\longrightarrow t(\beta) \; | \; \beta \in \mathcal{B}\}$, and $\overline{Q}_1 := Q_1\cup(\overline{\mathcal{A}} \cup \overline{\mathcal{B}}) \setminus (\mathcal{A} \cup \mathcal{B})$, so that we have the quiver $\overline{Q} := (\overline{Q}_0, \overline{Q}_1)$. We also define the summand vector $\overline{\mathbf{d}} \in \mathbb{Z}_{\geq 0}^{\overline{Q}_0}$ to be such that $\overline{d}_{v^{\mathcal{A}}}, \overline{d}_{v^{\mathcal{B}}} = d$ and $\overline{d}_i = d_i$ for all $i \in Q_0\setminus \{v\}$. Then $\operatorname{rad}A_{Q,\mathbf{d}} \cong \operatorname{rad}A_{\overline{Q}, \overline{\mathbf{d}}}$. In particular, $[Q, \mathbf{d}] = [\overline{Q}, \overline{\mathbf{d}}]$.
\end{thm}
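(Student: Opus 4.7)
The plan is to adapt the strategy of Theorem \ref{source conversion}, exploiting the key observation that every non-constant path in $Q$ out of the source $v$ is uniquely determined by its first arrow, which belongs to exactly one of $\mathcal{A}$ or $\mathcal{B}$.

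First I set up the block decomposition. I write $P_{Q,\mathbf{d}} = \bigoplus_{i=1}^{t} P_Q(v_i)$ with $v_i = v$ for $i = 1, \ldots, d$, and order the summands of $P_{\overline{Q}, \overline{\mathbf{d}}}$ so that the first $d$ correspond to $v^{\mathcal{A}}$, the next $d$ to $v^{\mathcal{B}}$, and the remaining $t - d$ to the vertices $v_i$ with $i > d$ (each distinct from $v$). Since $v$, $v^{\mathcal{A}}$, and $v^{\mathcal{B}}$ are sources in their respective quivers, the ``left'' columns of both radicals vanish, and the nonzero blocks live in the $(d+1, \ldots, t)$-columns of $\operatorname{rad} A_{Q,\mathbf{d}}$ and in the $(2d+1, \ldots, t+d)$-columns of $\operatorname{rad} A_{\overline{Q},\overline{\mathbf{d}}}$, as in the setup of Theorem \ref{source conversion}.

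Next I match bases using first arrows. By Remark \ref{rad_map_def}, a basis of $\operatorname{R}_{ij}$ for $i \leq d$, $j > d$ is indexed by non-constant paths $v \stackrel{\text{n.c.}}\curvearrowright v_j$ in $Q$; each such path has a unique first arrow, either in $\mathcal{A}$ or in $\mathcal{B}$. Replacing this first arrow by its formal copy $\overline{\alpha} \in \overline{\mathcal{A}}$ (resp.\ $\overline{\beta} \in \overline{\mathcal{B}}$) gives a bijection between $\mathcal{A}$-starting paths $v \stackrel{\text{n.c.}}\curvearrowright v_j$ in $Q$ and non-constant paths $v^{\mathcal{A}} \stackrel{\text{n.c.}}\curvearrowright v_j$ in $\overline{Q}$, and similarly for $\mathcal{B}$. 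For $i, j > d$ the paths $v_i \stackrel{\text{n.c.}}\curvearrowright v_j$ agree in $Q$ and $\overline{Q}$. This lets me define vector-space isomorphisms $\Psi_{ij}$ block by block that send the $\mathcal{A}$-part of $\operatorname{R}_{ij}$ (for $i \leq d$, $j > d$) into the corresponding $\overline{\operatorname{R}}_{ij}$-block of $v^{\mathcal{A}}$-summands, the $\mathcal{B}$-part into the corresponding $v^{\mathcal{B}}$-block, and, for $i,j > d$, $\psi_{ij}^{-1}(c)$ to $\overline{\psi}_{i+d,j+d}^{-1}(\overline{c})$. Assembling these yields a bijective $\mathbb{k}$-linear map $\mathbf{\Psi}: \operatorname{rad} A_{Q,\mathbf{d}} \longrightarrow \operatorname{rad} A_{\overline{Q},\overline{\mathbf{d}}}$.

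Finally I verify that $\mathbf{\Psi}$ preserves multiplication. The crucial combinatorial fact is that if $c: v \stackrel{\text{n.c.}}\curvearrowright v_k$ begins with an arrow in $\mathcal{A}$ and $p: v_k \stackrel{\text{n.c.}}\curvearrowright v_j$ is any non-constant path, then $cp$ also begins with that same arrow, hence remains in $\mathcal{A}$; the analogous statement holds for $\mathcal{B}$. Combined with Proposition \ref{basis_composition}, this shows that on basis elements the diagram of $\Psi$'s intertwines the two composition rules, and linearity extends this to all radical elements. The bookkeeping then mirrors the final step of Theorem \ref{source conversion} (with column-vector composition as in $(\ref{star_composition})$ replaced by two parallel such structures indexed by $v^{\mathcal{A}}$ and $v^{\mathcal{B}}$), and the conclusion on counts follows from the remark after Definition \ref{isomorphic ideals}.

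The main obstacle is purely bookkeeping: carefully tracking how the $d$ source rows of the original radical split into two groups of $d$ rows in the new radical, and confirming that composition with any non-source block respects the $\mathcal{A}/\mathcal{B}$ decomposition. Once this indexing is fixed, the preservation of multiplication is immediate from first-arrow stability and Proposition \ref{basis_composition}; no fundamentally new idea beyond Theorem \ref{source conversion} is required.
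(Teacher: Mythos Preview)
Your approach is correct and takes a genuinely different route from the paper. The paper first proves the case $d=1$ by constructing the explicit isomorphism you describe (with the single source row splitting according to the first arrow into one $v^{\mathcal{A}}$-row and one $v^{\mathcal{B}}$-row), and then handles general $d$ indirectly: it applies Source Conversion (Theorem~\ref{source conversion}) to replace the $d$-weighted source by a $1$-weighted source with $d$-fold duplicated arrows, invokes the $d=1$ splitting just established, and finally applies Source Conversion twice more (in reverse) to restore the weight $d$ on each of $v^{\mathcal{A}}$ and $v^{\mathcal{B}}$. By contrast, you build the isomorphism for arbitrary $d$ in one shot, sending the $\mathcal{A}$-part of each source row $i\le d$ to the $i$-th $v^{\mathcal{A}}$-row and its $\mathcal{B}$-part to the $i$-th $v^{\mathcal{B}}$-row. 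Your route is more self-contained (it does not rely on Theorem~\ref{source conversion} as a black box) at the cost of slightly heavier index bookkeeping; the paper's route is shorter once Theorem~\ref{source conversion} is in hand and nicely illustrates how the operations compose. Both rest on the same combinatorial fact you identified: the first arrow of any path out of $v$ is stable under right-concatenation, so the $\mathcal{A}/\mathcal{B}$ decomposition is preserved by multiplication.
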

\begin{proof}
Suppose that $d=1$ (we will prove the claim for general $d$ afterwards). We write $P_{Q, \mathbf{d}}$ as $\bigoplus_{i = 1,2,\dots,t} P_Q(v_i)$, so that $v_1 = v$ (and necessarily $v_i \neq v$ for all $i \geq 2$), and consistently $P_{\overline{Q}, \overline{\mathbf{d}}}$ as $\bigoplus_{i=1^{\mathcal{A}},1^{\mathcal{B}},2,\dots,t}P_{\overline{Q}}(v_i)$, where we defined $v_{1^{\mathcal{A}}} := v^{\mathcal{A}}$ and $v_{1^{\mathcal{B}}} := v^{\mathcal{B}}$. We have
\begin{align*}
    \operatorname{rad}A_{Q,\mathbf{d}} 
    &= [\operatorname{Rad}(P_Q(v_j), P_Q(v_i))]_{i,j = 1,2\dots,t} 
    = [\operatorname{R}_{ij}]_{i,j = 1,2,\dots,t} \quad \text{and}\\
    \operatorname{rad}A_{\overline{Q},\overline{\mathbf{d}}} 
    &= [\operatorname{Rad}(P_{\overline{Q}}(v_j), P_{\overline{Q}}(v_i))]_{i,j = 1^{\mathcal{A}},1^{\mathcal{B}},2,\dots,t} 
    = [\overline{\operatorname{R}}_{ij}]_{i,j = 1^{\mathcal{A}},1^{\mathcal{B}},2,\dots,t}.
\end{align*}
Consider the vector space isomorphisms $\psi_{ij}: \operatorname{R}_{ij} \; \stackrel{\sim}{\longrightarrow} \; \langle c \; | \; c:v_i \stackrel{\text{n.c.}}\curvearrowright v_j \in Q \rangle$ for $i=1,2,\dots,t$, and
$\overline{\psi}_{ij}: \overline{\operatorname{R}}_{ij} \; \stackrel{\sim}{\longrightarrow} \; \langle c \; | \; c:v_i \stackrel{\text{n.c.}}\curvearrowright v_j \in \overline{Q} \rangle$ for $i= 1^{\mathcal{A}},1^{\mathcal{B}},2,\dots,t$, as in Remark \ref{rad_map_def}. These give bases 
$\{\psi_{ij}^{-1}(c) \; | \; c:v_i \stackrel{\text{n.c.}}\curvearrowright v_j \in Q\}$ of $\operatorname{R}_{ij}$ and $\{\overline{\psi}_{ij}^{-1}(c) \; | \; c:v_i \stackrel{\text{n.c.}}\curvearrowright v_j \in \overline{Q}\}$ of $\overline{\operatorname{R}}_{ij}$, for each relevant pair $i,j$.

There is a natural identification between non-constant paths in $Q$ and $\overline{Q}$ (this is clear from (\ref{source splitting picture})). In accordance with this identification, for each $i,j \geq 2$ and non-constant path $c:v_i \stackrel{\text{n.c.}}\curvearrowright v_j$ in $Q$, denote by $\overline{c}$ its corresponding non-constant path $v_i \stackrel{\text{n.c.}}\curvearrowright v_j$ in $\overline{Q}$. For each $j \geq 2$ and non-constant path \\$c=\beta_1\cdots\beta_r:v \stackrel{\text{n.c.}}\curvearrowright v_j$ in $Q$ with first arrow $\beta_1 \in \mathcal{A}$ (respectively $\beta_1 \in \mathcal{B}$), let $\overline{c}$ denote its corresponding non-constant path $v^{\mathcal{A}} \stackrel{\text{n.c.}}\curvearrowright v_j$ (respectively $v^{\mathcal{B}} \stackrel{\text{n.c.}}\curvearrowright v_j$) in $\overline{Q}$.

\begin{itemize}
    \item Understanding $\operatorname{rad}A_{Q, \mathbf{d}}$:
    
    For each $i=1,2,\dots,t$ there are no non-constant paths $c:v_i \stackrel{\text{n.c.}}\curvearrowright v$ in $Q$, so $\operatorname{R}_{ij}=0$ for $j=1$. Therefore
    \begin{align*}
        \operatorname{rad}A_{Q, \mathbf{d}}
        =
        \left[
        \begin{array}{c|ccc}
            0 & \operatorname{R}_{12} & \cdots & \operatorname{R}_{1t}\\
            \hline
            0 & \operatorname{R}_{22} & \cdots & \operatorname{R}_{2t}\\
            \vdots & \vdots & \ddots & \vdots\\
            0 & \operatorname{R}_{t2}& \cdots & \operatorname{R}_{tt}
            \end{array}
        \right]
        \unlhd A_{Q, \mathbf{d}},
    \end{align*}
    with addition and scalar multiplication in $\operatorname{rad}A_{Q, \mathbf{d}}$ being componentwise, and multiplication being given for $f,g \in \operatorname{rad}A_{Q, \mathbf{d}}$ as
    \begin{equation*}
        (f \cdot_{A_{Q, \mathbf{d}}} g)_{ij} = \sum_{k=2}^t f_{ik} \circ g_{kj}, \; \text{for $j \geq 2$}.
    \end{equation*}
    
    \item Understanding $\operatorname{rad}A_{\overline{Q}, \overline{\mathbf{d}}}$:
    
    For each $i=1^{\mathcal{A}},1^{\mathcal{B}},2,\dots,t$ there are no non-constant paths $c:v_i \stackrel{\text{n.c.}}\curvearrowright v^{\mathcal{A}}$ or $c:v_i \stackrel{\text{n.c.}}\curvearrowright v^{\mathcal{B}}$ in $\overline{Q}$, so $\operatorname{\overline{R}}_{ij} = 0$ for $j=1^{\mathcal{A}},1^{\mathcal{B}}$. Furthermore, for each $j \geq 2$, we define the vector space
    $\overline{\operatorname{R}}_{1j} := \overline{\operatorname{R}}_{1^{\mathcal{A}}j} \oplus \overline{\operatorname{R}}_{1^{\mathcal{B}}j}$
    and, for $\overline{f}_{1j} = \left[\overline{f}_{1^{\mathcal{A}}j}, \overline{f}_{1^{\mathcal{B}}j}\right]^{\top} \in \overline{\operatorname{R}}_{1j}$ and $\overline{g}_{jk} \in \overline{\operatorname{R}}_{jk}$ with $j,k \geq 2$, define composition as
    \begin{equation*}
        \overline{f}_{1j} \circ \overline{g}_{jk} :=
        \left[
            \overline{f}_{1^{\mathcal{A}}j} \circ \overline{g}_{jk},
            \overline{f}_{1^{\mathcal{B}}j} \circ \overline{g}_{jk}
        \right]^{\top}.
    \end{equation*}
    With this we can write
    \begin{equation*}
        \operatorname{rad}A_{\overline{Q}, \overline{\mathbf{d}}} =
        \left[
        \begin{array}{c|ccc}
            O_{2\times 2} & \overline{\operatorname{R}}_{12} & \cdots & \overline{\operatorname{R}}_{1t} \\
            \hline
            & \overline{\operatorname{R}}_{22} & \cdots & \overline{\operatorname{R}}_{2t}\\
            O_{(t-1) \times 2} & \vdots & \ddots & \vdots\\
            & \overline{\operatorname{R}}_{t2} & \cdots & \overline{\operatorname{R}}_{tt}
        \end{array}
        \right]
        \unlhd A_{\overline{Q}, \overline{\mathbf{d}}},
    \end{equation*}
    with addition and scalar multiplication in $\operatorname{rad}A_{\overline{Q}, \overline{\mathbf{d}}}$ being componentwise, and multiplication being given for $\overline{f},\overline{g} \in \operatorname{rad}A_{\overline{Q}, \overline{\mathbf{d}}}$ as
    \begin{align*}
        (\overline{f} \cdot_{A_{\overline{Q}, \overline{\mathbf{d}}}} \overline{g})_{ij} = \sum_{k=2}^t \overline{f}_{ik} \circ \overline{g}_{kj}, \; \text{for $i=1,2,\dots,t$ and $j\geq 2$.}
    \end{align*}
\end{itemize}

We now define vector space isomorphisms $\Psi_{ij}:\operatorname{R}_{ij} \stackrel{\sim}{\longrightarrow} \; \overline{\operatorname{R}}_{ij}$ for $i=1,2,\dots,t$ and $j \geq 2$ on bases $\{\psi_{ij}^{-1}(c) \; | \; c:v_i \stackrel{\text{n.c.}}\curvearrowright v_j \in Q\}$ (these are isomorphisms due to the correspondence of non-constant paths in $Q$ and $\overline{Q}$ discussed earlier):
\begin{itemize}
    \item For $j \geq 2$:
    \begin{equation*}
        \Psi_{1j}:\operatorname{R}_{1j} \stackrel{\sim}{\longrightarrow} \; \overline{\operatorname{R}}_{1j};\quad  \psi_{1j}^{-1}(c) \longmapsto
        \begin{cases}
           \left[\overline{\psi}_{1^{\mathcal{A}}j}^{-1}(\overline{c}),0\right]^{\top}, \; \text{if $c=\beta_1\cdots\beta_r$ with $\beta_1 \in \mathcal{A}$, }
           \\
           \left[0,\overline{\psi}_{1^{\mathcal{B}}j}^{-1}(\overline{c})\right]^{\top}, \; \text{if $c=\beta_1\cdots\beta_r$ with $\beta_1 \in \mathcal{B}$.}
        \end{cases}
    \end{equation*}
    
    \item For $i,j \geq 2$:
    \begin{equation*}
        \Psi_{ij}:\operatorname{R}_{ij} \stackrel{\sim}{\longrightarrow} \; \overline{\operatorname{R}}_{ij};\quad  \psi_{ij}^{-1}(c) \longmapsto \overline{\psi}_{ij}^{-1}(\overline{c}).
    \end{equation*}
\end{itemize}

It can easily be checked that, for all $j,k\geq 2$ and any $i=1,2,\dots,t$, if $c:v_i \stackrel{\text{n.c.}}\curvearrowright v_k$ and $p:v_k \stackrel{\text{n.c.}}\curvearrowright v_j$ are non-constant paths in $Q$, then $\Psi_{ij}(\psi_{ik}^{-1}(c) \circ \psi_{kj}^{-1}(p)) = \Psi_{ik}(\psi_{ik}^{-1}(c)) \circ \Psi_{kj}(\psi_{kj}^{-1}(p))$, from which it follows by linearity that $\Psi_{ij}(f_{ik} \circ g_{kj}) = \Psi_{ik}(f_{ik}) \circ \Psi_{kj}(g_{kj})$ for all $f_{ik} \in \operatorname{R}_{ik}$ and $g_{kj} \in \operatorname{R}_{kj}$. Then the map $\mathbf{\Psi}:\operatorname{rad}A_{Q, \mathbf{d}} \longrightarrow \operatorname{rad}A_{\overline{Q}, \overline{\mathbf{d}}}$ defined by $(\mathbf{\Psi}(f))_{ij} = \Psi_{ij}(f_{ij})$ for $i=1,2,\dots,t$ and $j=2,\dots,t$ (which is a vector space isomorphism) can be checked to be multiplication-preserving, analogously to how it was done in the proof of Theorem \ref{source conversion}, and hence $\operatorname{rad}A_{Q, \mathbf{d}} \cong \operatorname{rad}A_{\overline{Q}, \overline{\mathbf{d}}}$ by Definition \ref{isomorphic ideals}. This proves the claim for $d=1$. 

Suppose now that we have general $d \in \mathbb{Z}_{\geq 0}$. By what we have just shown, and repeated applications of Theorem \ref{source conversion},
\begin{align*}
    \left[
    \begin{tikzpicture}[baseline={([yshift=-.7ex]current bounding box.center)}]
    \draw (1.48,1) node{$d$};
    \draw (0,0);
    \draw (0,2);
    \draw (3,2);
    \draw (3,0);
    \draw (0.5,1.1) node{\vdots};
    \draw (2.5,1.1) node{\vdots};
    \draw[<-] (0.15,0.15) -- (1.35,0.85);
    \draw[<-] (0.15,1.85) -- (1.35,1.15);
    \draw[->] (1.65,1.15) -- (2.85,1.85);
    \draw[->] (1.65,0.85) -- (2.85,0.15);
    \draw (0.9,0.3);
    \draw (2,0.3);
    \draw (0.9,1.71);
    \draw (2,1.71);
    \end{tikzpicture}
    \right]
    &\rightsquigarrow
    \left[
    \begin{tikzpicture}[baseline={([yshift=-.7ex]current bounding box.center)}]
    \draw (1.5,1) node{$1$};
    \draw (0,0);
    \draw (0,2);
    \draw (3,2);
    \draw (3,0);
    \draw (0.5,1.1) node{\vdots};
    \draw (2.5,1.1) node{\vdots};
    \draw[<-] (0.15,0.15) -- (1.35,0.85);
    \draw[<-] (0.15,1.85) -- (1.35,1.15);
    \draw[->] (1.65,1.15) -- (2.85,1.85);
    \draw[->] (1.65,0.85) -- (2.85,0.15);
    \draw (0.9,0.3) node{$\#d$};
    \draw (2,0.3) node{$\#d$};
    \draw (0.9,1.71) node{$\#d$};
    \draw (2,1.71) node{$\#d$};
    \end{tikzpicture}
    \right]
    \rightsquigarrow
    \left[
    \begin{tikzpicture}[baseline={([yshift=-.7ex]current bounding box.center)}]
    \draw (1.5,1) node{$1$};
    \draw (2.5,1) node{$1$};
    \draw (0,0);
    \draw (0,2);
    \draw (3,2);
    \draw (3,0);
    \draw (0.5,1.1) node{\vdots};
    \draw (3.5,1.1) node{\vdots};
    \draw[<-] (0.15,0.15) -- (1.35,0.85);
    \draw[<-] (0.15,1.85) -- (1.35,1.15);
    \draw[->] (2.65,1.15) -- (3.85,1.85);
    \draw[->] (2.65,0.85) -- (3.85,0.15);
    \draw (0.9,0.3) node{$\#d$};
    \draw (3,0.3) node{$\#d$};
    \draw (0.9,1.71) node{$\#d$};
    \draw (3,1.71) node{$\#d$};
    \end{tikzpicture}\;
    \right]\\
    &\rightsquigarrow
    \left[
    \begin{tikzpicture}[baseline={([yshift=-.7ex]current bounding box.center)}]
    \draw (1.5,1) node{$d$};
    \draw (2.5,1) node{$1$};
    \draw (0,0);
    \draw (0,2);
    \draw (3,2);
    \draw (3,0);
    \draw (0.5,1.1) node{\vdots};
    \draw (3.5,1.1) node{\vdots};
    \draw[<-] (0.15,0.15) -- (1.35,0.85);
    \draw[<-] (0.15,1.85) -- (1.35,1.15);
    \draw[->] (2.65,1.15) -- (3.85,1.85);
    \draw[->] (2.65,0.85) -- (3.85,0.15);
    \draw (3,0.3) node{$\#d$};
    \draw (3,1.71) node{$\#d$};
    \end{tikzpicture}\;
    \right]
    \rightsquigarrow
    \left[
    \begin{tikzpicture}[baseline={([yshift=-.7ex]current bounding box.center)}]
    \draw (1.5,1) node{$d$};
    \draw (2.5,1) node{$d$};
    \draw (0,0);
    \draw (0,2);
    \draw (3,2);
    \draw (3,0);
    \draw (0.5,1.1) node{\vdots};
    \draw (3.5,1.1) node{\vdots};
    \draw[<-] (0.15,0.15) -- (1.35,0.85);
    \draw[<-] (0.15,1.85) -- (1.35,1.15);
    \draw[->] (2.65,1.15) -- (3.85,1.85);
    \draw[->] (2.65,0.85) -- (3.85,0.15);
    \end{tikzpicture}\;
    \right]
\end{align*}
is a sequence of radical-preserving operations.
\end{proof}

As the following corollary shows, we have the dual notion of splitting sinks as follows:
\begin{equation*}
    \left[
    \begin{tikzpicture}[baseline={([yshift=-.7ex]current bounding box.center)}]
    \draw (1.48,1) node{$d$};
    \draw (0,0);
    \draw (0,2);
    \draw (3,2);
    \draw (3,0);
    \draw (0.5,1.1) node{\vdots};
    \draw (2.5,1.1) node{\vdots};
    \draw[->] (0.15,0.15) -- (1.35,0.85);
    \draw[->] (0.15,1.85) -- (1.35,1.15);
    \draw[<-] (1.65,1.15) -- (2.85,1.85);
    \draw[<-] (1.65,0.85) -- (2.85,0.15);
    \draw (0.9,0.3);
    \draw (2,0.3);
    \draw (0.9,1.71);
    \draw (2,1.71);
    
    \end{tikzpicture}
    \right]
    \rightsquigarrow
    \left[
    \begin{tikzpicture}[baseline={([yshift=-.7ex]current bounding box.center)}]
    \draw (1.5,1) node{$d$};
    \draw (2.5,1) node{$d$};
    \draw (0,0);
    \draw (0,2);
    \draw (3,2);
    \draw (3,0);
    \draw (0.5,1.1) node{\vdots};
    \draw (3.5,1.1) node{\vdots};
    \draw[->] (0.15,0.15) -- (1.35,0.85);
    \draw[->] (0.15,1.85) -- (1.35,1.15);
    \draw[<-] (2.65,1.15) -- (3.85,1.85);
    \draw[<-] (2.65,0.85) -- (3.85,0.15);
    \draw (0.9,0.3);
    \draw (3,0.3);
    \draw (0.9,1.71);
    \draw (3,1.71);
    \end{tikzpicture}\;
    \right]
\end{equation*}

\begin{cor}[Sink Splitting]\label{sink splitting}
Suppose that $Q$ is acyclic, $v\in Q_0$ is a sink, and $\mathbf{d} \in \mathbb{Z}_{\geq 0}^{Q_0}$ with $d_v = d$. Let $\{\mathcal{A},\mathcal{B}\}$ be a partition of the set $\{\alpha \in Q_1 \; | \; t(\alpha) = v\}$ of arrows with terminal vertex $v$. Define the sets $\overline{Q}_0 := Q_0\cup\{v^{\mathcal{A}}, v^{\mathcal{B}}\} \setminus \{v\}$, $\overline{\mathcal{A}} := \{s(\alpha) \stackrel{\overline{\alpha}}\longrightarrow v^{\mathcal{A}} \; | \; \alpha \in \mathcal{A}\}$, $\overline{\mathcal{B}} := \{s(\beta) \stackrel{\overline{\beta}}\longrightarrow v^{\mathcal{B}} \; | \; \beta \in \mathcal{B}\}$, and $\overline{Q}_1 := Q_1\cup(\overline{\mathcal{A}} \cup \overline{\mathcal{B}}) \setminus (\mathcal{A} \cup \mathcal{B})$, so that we have the quiver $\overline{Q} := (\overline{Q}_0, \overline{Q}_1)$. We also define the summand vector $\overline{\mathbf{d}} \in \mathbb{Z}_{\geq 0}^{\overline{Q}_0}$ to be such that $\overline{d}_{v^{\mathcal{A}}}, \overline{d}_{v^{\mathcal{B}}} = d$ and $\overline{d}_i = d_i$ for all $i \in Q_0\setminus \{v\}$. Then $\operatorname{rad}A_{Q,\mathbf{d}} \cong \operatorname{rad}A_{\overline{Q}, \overline{\mathbf{d}}}$. In particular, $[Q, \mathbf{d}] = [\overline{Q}, \overline{\mathbf{d}}]$.
\end{cor}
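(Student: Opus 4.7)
The plan is to mirror exactly the deduction of Corollary \ref{sink conversion} from Theorem \ref{source conversion}: since a sink in $Q$ becomes a source in $Q^{\operatorname{op}}$, sink splitting should reduce to source splitting after two applications of arrow reversal, with nothing new to prove about the radicals themselves beyond what Theorem \ref{source splitting} and Theorem \ref{opposite_quiver} already give us.

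First I would set up the dictionary. The vertex $v$ is a source in $Q^{\operatorname{op}}$, and reversing arrows identifies the set $\{\alpha \in Q_1 \mid t(\alpha)=v\}$ with the set $\{\alpha^{\operatorname{op}} \in (Q^{\operatorname{op}})_1 \mid s(\alpha^{\operatorname{op}}) = v\}$, so the partition $\{\mathcal{A},\mathcal{B}\}$ in $Q$ corresponds to a partition $\{\mathcal{A}^{\operatorname{op}},\mathcal{B}^{\operatorname{op}}\}$ of the arrows emanating from $v$ in $Q^{\operatorname{op}}$. The summand vector $\overline{\mathbf{d}}$ is defined purely in terms of the vertex set, so it is the same whether interpreted via Theorem \ref{source splitting} applied to $Q^{\operatorname{op}}$ or via this corollary applied to $Q$.

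Next, I would verify the key combinatorial identity $\overline{Q^{\operatorname{op}}} = (\overline{Q})^{\operatorname{op}}$, where on the left I apply Theorem \ref{source splitting} to $Q^{\operatorname{op}}$ with partition $\{\mathcal{A}^{\operatorname{op}},\mathcal{B}^{\operatorname{op}}\}$, and on the right I apply the construction of this corollary to $Q$ with partition $\{\mathcal{A},\mathcal{B}\}$ and then reverse all arrows. Both operations produce the same vertex set $Q_0 \cup \{v^{\mathcal{A}}, v^{\mathcal{B}}\} \setminus \{v\}$, and both replace each arrow ending at $v$ in $Q$ (equivalently, starting at $v$ in $Q^{\operatorname{op}}$) by an arrow between the same outside endpoint and the appropriate split copy, so the check is immediate.

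Finally, I would string these together using the fact that taking the opposite algebra is an involution that preserves the radical as a set, yielding the chain
\begin{align*}
    \operatorname{rad}A_{Q,\mathbf{d}}
    = (\operatorname{rad}A_{Q,\mathbf{d}}^{\operatorname{op}})^{\operatorname{op}}
    &\stackrel{\text{Thm \ref{opposite_quiver}}}\cong (\operatorname{rad}A_{Q^{\operatorname{op}},\mathbf{d}})^{\operatorname{op}}
    \stackrel{\text{Thm \ref{source splitting}}}\cong (\operatorname{rad}A_{\overline{Q^{\operatorname{op}}},\overline{\mathbf{d}}})^{\operatorname{op}} \\
    &= (\operatorname{rad}A_{(\overline{Q})^{\operatorname{op}},\overline{\mathbf{d}}})^{\operatorname{op}}
    \stackrel{\text{Thm \ref{opposite_quiver}}}\cong (\operatorname{rad}A_{\overline{Q},\overline{\mathbf{d}}}^{\operatorname{op}})^{\operatorname{op}}
    = \operatorname{rad}A_{\overline{Q},\overline{\mathbf{d}}},
\end{align*}
exactly as in Corollary \ref{sink conversion} and Remark \ref{on sink conversion}. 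The equality of counts $[Q,\mathbf{d}] = [\overline{Q},\overline{\mathbf{d}}]$ follows by the remark after Definition \ref{isomorphic ideals}. There is no real obstacle here; the only thing that requires any care is the bookkeeping identification $\overline{Q^{\operatorname{op}}} = (\overline{Q})^{\operatorname{op}}$, and even that is transparent once one draws the two relevant local pictures as in Remark \ref{on sink conversion}.
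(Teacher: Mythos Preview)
Your proposal is correct and follows essentially the same approach as the paper: the paper's proof of Corollary~\ref{sink splitting} simply invokes the argument pattern of Remark~\ref{on sink conversion} (arrow reversal, then source splitting, then arrow reversal again), which is precisely the chain of isomorphisms you write out explicitly. The only difference is presentational---the paper draws the sequence of pictures while you record the algebraic chain and note the bookkeeping identity $\overline{Q^{\operatorname{op}}} = (\overline{Q})^{\operatorname{op}}$---but the underlying argument is identical.
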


\begin{proof}
Analogous to the argument of Remark \ref{on sink conversion}, we have that the sequence of operations
\begin{align*}
    \left[
    \begin{tikzpicture}[baseline={([yshift=-.7ex]current bounding box.center)}]
    \draw (1.48,1) node{$d$};
    \draw (0,0);
    \draw (0,2);
    \draw (3,2);
    \draw (3,0);
    \draw (0.5,1.1) node{\vdots};
    \draw (2.5,1.1) node{\vdots};
    \draw[->] (0.15,0.15) -- (1.35,0.85);
    \draw[->] (0.15,1.85) -- (1.35,1.15);
    \draw[<-] (1.65,1.15) -- (2.85,1.85);
    \draw[<-] (1.65,0.85) -- (2.85,0.15);
    \draw (0.9,0.3);
    \draw (2,0.3);
    \draw (0.9,1.71);
    \draw (2,1.71);
    \end{tikzpicture}
    \right]
    &\rightsquigarrow
    \left[
    \begin{tikzpicture}[baseline={([yshift=-.7ex]current bounding box.center)}]
    \draw (1.48,1) node{$d$};
    \draw (0,0);
    \draw (0,2);
    \draw (3,2);
    \draw (3,0);
    \draw (0.5,1.1) node{\vdots};
    \draw (2.5,1.1) node{\vdots};
    \draw[<-] (0.15,0.15) -- (1.35,0.85);
    \draw[<-] (0.15,1.85) -- (1.35,1.15);
    \draw[->] (1.65,1.15) -- (2.85,1.85);
    \draw[->] (1.65,0.85) -- (2.85,0.15);
    \draw (0.9,0.3);
    \draw (2,0.3);
    \draw (0.9,1.71);
    \draw (2,1.71);
    \end{tikzpicture}
    \right]\\
    &\stackrel{\text{Thm. \ref{source splitting}}}\rightsquigarrow
    \left[
    \begin{tikzpicture}[baseline={([yshift=-.7ex]current bounding box.center)}]
    \draw (1.5,1) node{$d$};
    \draw (2.5,1) node{$d$};
    \draw (0,0);
    \draw (0,2);
    \draw (3,2);
    \draw (3,0);
    \draw (0.5,1.1) node{\vdots};
    \draw (3.5,1.1) node{\vdots};
    \draw[<-] (0.15,0.15) -- (1.35,0.85);
    \draw[<-] (0.15,1.85) -- (1.35,1.15);
    \draw[->] (2.65,1.15) -- (3.85,1.85);
    \draw[->] (2.65,0.85) -- (3.85,0.15);
    \end{tikzpicture}\;
    \right]
    \rightsquigarrow
    \left[
    \begin{tikzpicture}[baseline={([yshift=-.7ex]current bounding box.center)}]
    \draw (1.5,1) node{$d$};
    \draw (2.5,1) node{$d$};
    \draw (0,0);
    \draw (0,2);
    \draw (3,2);
    \draw (3,0);
    \draw (0.5,1.1) node{\vdots};
    \draw (3.5,1.1) node{\vdots};
    \draw[->] (0.15,0.15) -- (1.35,0.85);
    \draw[->] (0.15,1.85) -- (1.35,1.15);
    \draw[<-] (2.65,1.15) -- (3.85,1.85);
    \draw[<-] (2.65,0.85) -- (3.85,0.15);
    \end{tikzpicture}\;
    \right]
\end{align*}
leaves the radical's structure unchanged.
\end{proof}

The significance of Theorem \ref{source splitting} and Corollary \ref{sink splitting} is that when a source/sink of a connected quiver is the only point of contact between otherwise distinct connected components, then we can split the quiver at the source/sink so as to disconnect it, yielding these connected components while preserving the count. By Theorem \ref{disconnected quiver}, the count of the original quiver is then the product of the counts of the connected components.

\section{First Cases} \label{examples}
The results and operations introduced in \S \ref{reductions} and \S \ref{more reductions} are particularly effective for understanding the counts of quivers and summand vectors where after removal of zero-vertices as in Theorem \ref{zero node removal} there are no remaining paths of length greater than $2$. For such quivers and summand vectors we see that it suffices to understand counts of the forms
\begin{align*}
    \left[
    \begin{tikzpicture}[baseline={([yshift=-.7ex]current bounding box.center)}]
        \draw (0,0) node{1};
    \end{tikzpicture}
    \right],
    \left[
    \begin{tikzpicture}[baseline={([yshift=-.7ex]current bounding box.center)}]
        \draw (0,0) node{1};
        \draw (1.5,0) node{1};
        \draw[->] (0.2,0) -- (1.3,0);
    \end{tikzpicture}
    \right], \; \text{and} \;
    \left[
    \begin{tikzpicture}[baseline={([yshift=-.7ex]current bounding box.center)}]
        \draw (0,0) node{$l$};
        \draw (1.5,0) node{$d$};
        \draw (3,0) node{$m$};
        \draw[->] (0.2,0) -- (1.3,0);
        \draw[<-] (2.8,0) -- (1.7,0);
    \end{tikzpicture}
    \right].
\end{align*}
It is clear that 
$\left[
\begin{tikzpicture}[baseline={([yshift=-.7ex]current bounding box.center)}]
    \draw (0,0) node{1};
\end{tikzpicture}
\right] = 1$, and that 
$\left[
\begin{tikzpicture}[baseline={([yshift=-.7ex]current bounding box.center)}]
    \draw (0,0) node{1};
    \draw (1.5,0) node{1};
    \draw[->] (0.2,0) -- (1.3,0);
\end{tikzpicture}
\right] = q^2$.

\begin{prop} For $l,d,m \geq 1$ we have
\begin{equation*}
\left[
\begin{tikzpicture}[baseline={([yshift=-.7ex]current bounding box.center)}]
    \draw (0,0) node{$l$};
    \draw (1.5,0) node{$d$};
    \draw (3,0) node{$m$};
    \draw[->] (0.2,0) -- (1.3,0);
    \draw[<-] (2.8,0) -- (1.7,0);
\end{tikzpicture}
\right]
=
q^{2lm} \sum_{i=\max\{0,2d-l\}}^{2d}
\left(
q^{mi}
\prod_{j=0}^{i-1}\left( \frac{q^{2d}-q^j}{q^i-q^j}\right)
\prod_{j=0}^{2d-i-1}\left(q^l-q^j\right)
\right).
\end{equation*}
\end{prop}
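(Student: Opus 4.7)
The plan is to describe the radical via the basis of non-constant paths in $Q$, reduce the commutativity condition to a matrix equation, and then count solutions by a Grassmannian stratification.

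Label the arrows $\alpha : 1 \to 2$ and $\beta : 2 \to 3$ in $Q$, so the non-constant paths in $Q$ are $\alpha$, $\beta$, and $\alpha\beta$. By Remark \ref{rad_map_def}, every element $f \in \operatorname{rad} A_{Q,\mathbf{d}}$ is encoded by three matrices
\[
X \in M_{l \times d}(\mathbb{k}), \qquad Y \in M_{d \times m}(\mathbb{k}), \qquad Z \in M_{l \times m}(\mathbb{k}),
\]
occupying the $\alpha$-, $\beta$- and $\alpha\beta$-components at block positions $(1,2)$, $(2,3)$, and $(1,3)$; all other blocks are forced to be zero. Applying Proposition \ref{basis_composition} (in particular $\alpha \circ \beta = \alpha\beta$) and expanding $(f \circ g)_{ij} = \sum_k f_{ik} \circ g_{kj}$ block by block, one finds
\[
(X, Y, Z) \circ (X', Y', Z') \;=\; (0,\, 0,\, X Y'),
\]
so $\operatorname{rad}^2 A_{Q, \mathbf{d}}$ lies entirely in the $\alpha\beta$-block and is independent of $Z, Z'$. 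The condition $f \circ g = g \circ f$ therefore reduces to $XY' = X'Y$ with $Z, Z'$ free, giving
\[
[Q, \mathbf{d}] \;=\; q^{2lm}\, N, \qquad N := \#\bigl\{(X, X', Y, Y') \mid X Y' = X' Y\bigr\}.
\]

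To evaluate $N$, I would repackage the data as $\Phi := [X \mid X'] \in M_{l \times 2d}(\mathbb{k})$ and $\Psi := \begin{pmatrix} Y' \\ -Y \end{pmatrix} \in M_{2d \times m}(\mathbb{k})$. This is a bijection between tuples $(X, X', Y, Y')$ and pairs $(\Phi, \Psi)$, and a short computation shows $\Phi \Psi = XY' - X'Y$, so the constraint becomes $\Phi \Psi = 0$, i.e.\ $\operatorname{im} \Psi \subseteq \ker \Phi$. I then stratify by $K := \ker \Phi \subseteq \mathbb{k}^{2d}$, writing $i := \dim K$. For each such $K$: the number of subspaces of $\mathbb{k}^{2d}$ of dimension $i$ is the $q$-binomial $\binom{2d}{i}_q = \prod_{j=0}^{i-1} \frac{q^{2d} - q^j}{q^i - q^j}$; the number of $\Phi : \mathbb{k}^{2d} \to \mathbb{k}^l$ with $\ker \Phi = K$ equals the number of injections $\mathbb{k}^{2d - i} \hookrightarrow \mathbb{k}^l$, namely $\prod_{j=0}^{2d-i-1}(q^l - q^j)$, which is nonzero precisely when $i \geq 2d - l$; and the number of $\Psi$ with $\operatorname{im} \Psi \subseteq K$ is $q^{mi}$. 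Summing the product over $i \in \{\max(0, 2d-l), \dots, 2d\}$ and multiplying by $q^{2lm}$ reproduces the stated formula.

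The main obstacle, in my view, is establishing the crisp multiplication rule $(X, Y, Z) \circ (X', Y', Z') = (0, 0, XY')$: this forces the correct block-by-block unpacking of the radical and careful bookkeeping of where the basis paths $\alpha$, $\beta$ and $\alpha\beta$ sit, together with vanishing of all other contributions because there are no paths $2 \to 1$, $3 \to 1$, or $3 \to 2$. Once that identity and the bijective repackaging $(X, X', Y, Y') \leftrightarrow (\Phi, \Psi)$ are in place, the computation of $N$ is a standard kernel-stratified $q$-enumeration.
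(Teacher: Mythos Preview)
Your proposal is correct and follows essentially the same route as the paper: identify the radical with block strictly upper-triangular matrices, reduce the commutator condition to $XY'=X'Y$ with the $(1,3)$-blocks free, repackage this as $\Phi\Psi=0$ for $\Phi\in M_{l\times 2d}$ and $\Psi\in M_{2d\times m}$, and stratify by $\dim\ker\Phi$. The only cosmetic difference is that the paper counts linear maps $\mathbb{k}^{2d}\to\mathbb{k}^l$ of given kernel dimension via a $(\ker,\operatorname{im},\text{iso})$ bijection, whereas you fix the kernel subspace first and count injections $\mathbb{k}^{2d-i}\hookrightarrow\mathbb{k}^l$; both yield the same product.
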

\begin{proof}
Keeping in mind the underlying quiver, we see that $\operatorname{rad}A_{Q,\mathbf{d}}$ is isomorphic to the set of block matrices of the form
\begin{equation*}
\left[
\begin{array}{c|c|c}
     O & A & B \\\hline
     O & O & C \\\hline
     O & O & O
\end{array}
\right],
\end{equation*} 
where $A \in \operatorname{Mat}_{l \times d}(\mathbb{k})$, $B \in \operatorname{Mat}_{l \times m}(\mathbb{k})$, and $C \in \operatorname{Mat}_{d \times m}(\mathbb{k})$. A pair
\begin{equation}\label{form of pair}
\left(
\left[
\begin{array}{c|c|c}
     O & A & B \\ \hline
     O & O & C \\ \hline
     O & O & O
\end{array}
\right],
\left[
\begin{array}{c|c|c}
     O & \overline{A} & \overline{B} \\\hline
     O & O & \overline{C} \\\hline
     O & O & O
\end{array}
\right]
\right)
\end{equation}
commutes if and only if $A \overline{C} - \overline{A} C = O$. Writing this condition as 
$
\left[ A \middle| -\overline{A} \right]_{l \times 2d}
\left[
\begin{array}{c}
\overline{C}\\
\hline
C
\end{array}
\right]_{2d \times m} = O
$ and keeping in mind $B$ and $\overline{B}$, we see that the number of commuting pairs as in (\ref{form of pair}), the count, is $q^{2lm}$ multiplied by the number of pairs $(X,Y)$ where $X$ is a linear mapping $\mathbb{k}^{2d} \longrightarrow \mathbb{k}^{l}$ and $Y$ is a linear mapping $\mathbb{k}^{m} \longrightarrow \mathbb{k}^{2d}$ with $\operatorname{im}Y \subseteq \operatorname{ker}X$. Writing all this concisely, we have 
\begin{equation*}
\left[
\begin{tikzpicture}[baseline={([yshift=-.7ex]current bounding box.center)}]
    \draw (0,0) node{$l$};
    \draw (1.5,0) node{$d$};
    \draw (3,0) node{$m$};
    \draw[->] (0.2,0) -- (1.3,0);
    \draw[<-] (2.8,0) -- (1.7,0);
\end{tikzpicture}
\right]
=
q^{2lm}
\left|
\left\{
\left(X:\mathbb{k}^{2d} \longrightarrow \mathbb{k}^{l},Y:\mathbb{k}^{m} \longrightarrow \mathbb{k}^{2d}\right)
\; \middle| \;
\operatorname{im}Y \subseteq \operatorname{ker}X
\right\}
\right|,
\end{equation*}
where the fact that $X$ and $Y$ are linear is understood. Next, note that
\begin{align*}
\left|
\left\{
\left(X:\mathbb{k}^{2d} \longrightarrow \mathbb{k}^{l},Y:\mathbb{k}^{m} \longrightarrow \mathbb{k}^{2d} \right)
\; \middle| \;
\operatorname{im}Y \subseteq \operatorname{ker}X
\right\}
\right|
 =
\left|
\left\{
\left(X:\mathbb{k}^{2d} \longrightarrow \mathbb{k}^{l},Y:\mathbb{k}^{m} \longrightarrow \operatorname{ker}X\right)
\right\}
\right|
\\=
\left|
\left\{
\left(X:\mathbb{k}^{2d} \longrightarrow \mathbb{k}^{l},Y:\mathbb{k}^{m} \longrightarrow \mathbb{k}^{\operatorname{dim}(\operatorname{ker}X)}\right)
\right\}
\right|
\\=
\sum_{i=\max\{0,2d - l\}}^{2d}
\left|
\left\{
X:\mathbb{k}^{2d} \longrightarrow \mathbb{k}^{l}
\; \middle| \;
\operatorname{dim}(\operatorname{ker}X) = i
\right\}
\right|
\cdot
\underbrace{
\left|
\left\{
Y:\mathbb{k}^{m} \longrightarrow \mathbb{k}^i
\right\}
\right|}_{q^{mi}}.
\end{align*}
We now determine 
$\left|
\left\{
X:\mathbb{k}^{2d} \longrightarrow \mathbb{k}^{l}
\; \middle| \;
\operatorname{dim}(\operatorname{ker}X) = i
\right\}
\right|$. 
By universality, a linear map $X:\mathbb{k}^{2d} \longrightarrow \mathbb{k}^{l}$ factors \textit{uniquely} through the canonical mapping $\pi:\mathbb{k}^{2d} \longrightarrow \mathbb{k}^{2d}/\operatorname{ker}X$, inducing an isomorphism \\$\overline{X}:\mathbb{k}^{2d}/\operatorname{ker}X \stackrel{\sim}\longrightarrow \operatorname{im}X$:
\begin{equation*}
\begin{tikzcd}[row sep=10ex, column sep=10ex]
    \mathbb{k}^{2d} \arrow[r, "\pi"] \arrow[dr, "X", swap] 
    & \mathbb{k}^{2d}/\operatorname{ker}X \arrow[d, dashed, "\sim"  labl,"\exists ! \overline{X}"] \\
    & \operatorname{im}X
\end{tikzcd}.
\end{equation*}
Thus we have a bijection from the set of linear mappings $\mathbb{k}^{2d} \longrightarrow \mathbb{k}^{l}$ with kernel of dimension $i$ to the set of tuples $(U,V,\phi)$ where $U \subseteq \mathbb{k}^{2d}$ and $V \subseteq \mathbb{k}^{l}$ are subspaces of dimensions $i$ and $2d-i$ respectively and $\phi$ is an isomorphism $\mathbb{k}^{2d}/U \stackrel{\sim}\longrightarrow V$. This bijection sends $X \longmapsto (\operatorname{ker}X, \operatorname{im}X, \overline{X})$. Due to this bijection, we see that
$
\left|
\left\{
X:\mathbb{k}^{2d} \longrightarrow \mathbb{k}^{l}
\; \middle| \;
\operatorname{dim}(\operatorname{ker}X) = i
\right\}
\right|
$
is equal to
\begin{align*}
\left|
\left\{
U \subseteq \mathbb{k}^{2d} 
\; \middle| \;
\dim U = i
\right\}
\right|
\cdot
\left|
\left\{
V \subseteq \mathbb{k}^{l} 
\; \middle| \;
\dim V = 2d-i
\right\}
\right|
\cdot
\left|
GL_{2d-i}(\mathbb{k})
\right|,
\end{align*}
where the fact that $U$ and $V$ are subspaces is understood. The total number of ordered bases for subspaces of dimension $i$ in $\mathbb{k}^{2d}$ is $\prod_{j=0}^{i-1}(q^{2d}-q^j)$ (the empty product is understood to be 1 for the case $i = 0$), while the number of ordered bases for a given subspace $U \subseteq \mathbb{k}^{2d}$ of dimension $i$ is $\prod_{j=0}^{i-1}(q^{i}-q^j)$. Hence 
\begin{align*}
\left|
\left\{
U \subseteq \mathbb{k}^{2d} 
\; \middle| \;
\dim U = i
\right\}
\right|
&=
\prod_{j=0}^{i-1}\left(\frac{q^{2d}-q^j}{q^{i}-q^j}\right), 
\; \text{and similarly}\\
\left|
\left\{
V \subseteq \mathbb{k}^{l} 
\; \middle| \;
\dim V = 2d-i
\right\}
\right|
&=
\prod_{j=0}^{2d-i-1}\left(\frac{q^{l}-q^j}{q^{2d-i}-q^j}\right).
\end{align*}
Finally, we have $\left|GL_{2d-i}(\mathbb{k})\right| = \prod_{j=0}^{2d-i-1}(q^{2d-i}-q^j)$, yielding
\begin{equation*}
\left|
\left\{
X:\mathbb{k}^{2d} \longrightarrow \mathbb{k}^{l}
\; \middle| \;
\operatorname{dim}(\operatorname{ker}X) = i
\right\}
\right|
=
\prod_{j=0}^{i-1}\left(\frac{q^{2d}-q^j}{q^{i}-q^j}\right)
\prod_{j=0}^{2d-i-1}\left(q^{l}-q^j\right),
\end{equation*}
and the result follows. 
\end{proof}

\section{Further Investigations}
\label{furtherdirections}
In this section we present a few questions related to counts that are of interest to attack the quiver version of Higman's conjecture stated in Section \ref{a quiver generalization} and to understand its link to Hall algebras.

\subsection{Commuting endomorphisms}
As mentioned in the introduction, determining the character of the Hall algebra of pairs $(P,f)$ of a projective representation $P$ of an acyclic quiver together with an endomorphism $f\in\textrm{rad}(\operatorname{End}(P))$, which has links with different Hall algebras considered in the literature \cite{ruan2021lie,bridgeland2013quantum}, involves the number of commuting pairs in $\mathrm{Aut}(P)\times\mathrm{rad}(\mathrm{End}(P))$. As in \cite[Theorem 1.1]{Mozgovoy}, this number is related to the number of commuting pairs in $\mathrm{End}(P)\times\mathrm{rad}(\mathrm{End}(P))$ and the polynomiality of one implies the polynomiality of the other. As in Definition \ref{dfn:projectiverepresentation} and Remark \ref{rigorous sense}, a projective representation of an acyclic quiver $Q$ is determined by its summand vector $\mathbf{d}\in\mathbb{Z}_{\geq 0}^{Q_0}$. We let $\overline{[Q,\mathbf{d}]}$ denote the number of commuting pairs in $A_{Q,\mathbf{d}}\times\operatorname{rad}A_{Q,\mathbf{d}}$.

\begin{conj}
 Let $Q$ be an acyclic quiver and $\mathbf{d}\in \mathbb{Z}_{\geq 0}^{Q_0}$. The quantity $\overline{[Q,\mathbf{e}]}$ is a polynomial in $q$ for all $\mathbf{e}\leq\mathbf{d}$ if and only if $[Q,\mathbf{e}]$ is a polynomial in $q$ for all $\mathbf{e}\leq\mathbf{d}$. Moreover, $\overline{[Q,\mathbf{d}]}$ can be expressed in terms of $[Q,\mathbf{e}]$ for $\mathbf{e}\leq \mathbf{d}$. 
\end{conj}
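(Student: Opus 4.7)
The plan is to establish a triangular polynomial relation between $\overline{[Q,\mathbf{d}]}$ and the family $\{[Q,\mathbf{e}]\}_{\mathbf{e}\leq\mathbf{d}}$ via Fitting's lemma applied to $f$. Inversion of this relation will then yield both the explicit expression of the ``moreover'' statement and the reverse direction of the equivalence of polynomialities.

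Given a commuting pair $(f,g) \in A_{Q,\mathbf{d}}\times \operatorname{rad}A_{Q,\mathbf{d}}$, I would first associate the unique Fitting decomposition $P_{Q,\mathbf{d}} = P^0 \oplus P^1$ with $f$ nilpotent on $P^0$ and invertible on $P^1$. Because $g$ commutes with $f$ it preserves this splitting, and because $P^0,P^1$ are direct summands of a projective they are themselves projective, so $P^i \cong P_{Q,\mathbf{e}_i}$ with $\mathbf{e}_0+\mathbf{e}_1=\mathbf{d}$ by Krull--Schmidt. Writing $g$ as a block matrix with respect to this decomposition, the off-diagonal blocks vanish, and the block-matrix description of the radical from the preliminaries gives $g|_{P^i}\in\operatorname{rad}\operatorname{End}(P^i)$. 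Partitioning $\operatorname{Comm}(A_{Q,\mathbf{d}},\operatorname{rad}A_{Q,\mathbf{d}})$ by the isomorphism class $(\mathbf{e}_0,\mathbf{e}_1)$ of the Fitting decomposition, and counting decompositions of each type as a single $\operatorname{Aut}(P_{Q,\mathbf{d}})$-orbit, yields
\[
\overline{[Q,\mathbf{d}]} = \sum_{\mathbf{e}_0+\mathbf{e}_1=\mathbf{d}}\frac{|\operatorname{Aut}(P_{Q,\mathbf{d}})|}{|\operatorname{Aut}(P_{Q,\mathbf{e}_0})|\,|\operatorname{Aut}(P_{Q,\mathbf{e}_1})|}\, N(Q,\mathbf{e}_0)\, M(Q,\mathbf{e}_1),
\]
where $N(Q,\mathbf{e})$ counts commuting pairs in $A_{Q,\mathbf{e}}\times\operatorname{rad}A_{Q,\mathbf{e}}$ whose first entry is nilpotent and $M(Q,\mathbf{e})$ counts them with the first entry invertible. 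The automorphism group orders are elementary polynomials in $q$.

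The next step is to reduce $N(Q,\mathbf{e})$ and $M(Q,\mathbf{e})$ to the counts $[Q,\mathbf{e}']$ for $\mathbf{e}'\leq\mathbf{e}$. Using the Wedderburn--Malcev splitting $A_{Q,\mathbf{e}} = S \oplus \operatorname{rad}A_{Q,\mathbf{e}}$ with $S\cong \prod_i\operatorname{Mat}_{e_i}(\mathbb{k})$, write $f=s+r$ uniquely with $s \in S$ and $r\in\operatorname{rad}A_{Q,\mathbf{e}}$; then $f$ is nilpotent (resp.\ invertible) iff $s$ is. For $N(Q,\mathbf{e})$, stratify by the $\prod_i\operatorname{GL}_{e_i}(\mathbb{k})$-conjugacy class of $s$: the term $s=0$ contributes exactly $[Q,\mathbf{e}]$, and each nontrivial nilpotent conjugacy class should reduce, via lifting of the idempotents attached to the primary components of $s$, to a count on a proper direct summand with summand vector strictly smaller than $\mathbf{e}$. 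For $M(Q,\mathbf{e})$, Burnside's lemma gives $M(Q,\mathbf{e}) = |\operatorname{Aut}(P_{Q,\mathbf{e}})|$ times the number of $\operatorname{Aut}(P_{Q,\mathbf{e}})$-orbits on $\operatorname{rad}A_{Q,\mathbf{e}}$, and a dual Burnside application on $\operatorname{Comm}(\operatorname{rad}A_{Q,\mathbf{e}},\operatorname{rad}A_{Q,\mathbf{e}})$ counted both as $[Q,\mathbf{e}]$ and as $|\operatorname{rad}A_{Q,\mathbf{e}}|$ times an average centralizer size relates these orbit counts polynomially to the $[Q,\mathbf{e}']$.

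The hardest step will be the nilpotent stratification, because working over $\mathbb{F}_q$ rather than an algebraically closed field forces one to replace Jordan canonical form by rational canonical form; the stratification becomes indexed by partition-valued functions on monic irreducibles over $\mathbb{F}_q$, and one must verify that summing the contributions over primary components collapses into counts $[Q,\mathbf{e}']$ with $\mathbf{e}'<\mathbf{e}$. Once this is achieved the combined relation expressing $\overline{[Q,\mathbf{d}]}$ in terms of $\{[Q,\mathbf{e}]\}_{\mathbf{e}\leq\mathbf{d}}$ is upper triangular in $\mathbf{d}$, with the leading coefficient arising from the term $(\mathbf{e}_0,\mathbf{e}_1)=(\mathbf{d},\mathbf{0})$ and $s=0$ being $1$, so it is invertible. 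Inverting yields an expression of $[Q,\mathbf{d}]$ as a polynomial-in-$q$ combination of $\{\overline{[Q,\mathbf{e}]}\}_{\mathbf{e}\leq\mathbf{d}}$, proving the equivalence of polynomialities in both directions.
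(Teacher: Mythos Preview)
The statement you are trying to prove is presented in the paper as a \emph{conjecture}; the paper gives no proof. The authors only remark that it is the analogue of a result in Evseev's thesis for the classical case $B_n\times T_n$. So there is no ``paper's own proof'' to compare against, and any argument you supply is necessarily going beyond the paper.

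Your Fitting-decomposition step is sound: the splitting $P_{Q,\mathbf d}=P^0\oplus P^1$, its preservation by $g$, the projectivity of the summands, the identification of $g|_{P^i}$ as radical, and the orbit count for ordered decompositions of fixed type are all correct, and the displayed formula for $\overline{[Q,\mathbf d]}$ in terms of $N(Q,\mathbf e_0)$ and $M(Q,\mathbf e_1)$ follows. The gaps are in the two subsequent reductions. For $N(Q,\mathbf e)$ you propose to stratify by the $\prod_i\mathrm{GL}_{e_i}$-class of the semisimple part $s$ and, for nonzero nilpotent $s$, to ``lift the idempotents attached to the primary components of $s$''. But a nilpotent element has a single primary component, so there are no nontrivial idempotents to lift; it is not clear what decomposition you intend here, nor why the resulting count should be one of the $[Q,\mathbf e']$ for $\mathbf e'<\mathbf e$ rather than a genuinely new quantity. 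For $M(Q,\mathbf e)$ your Burnside identity $M(Q,\mathbf e)=|\mathrm{Aut}(P_{Q,\mathbf e})|\cdot(\text{number of }\mathrm{Aut}\text{-orbits on }\operatorname{rad})$ is correct, but the ``dual Burnside'' you invoke gives $[Q,\mathbf e]=|\operatorname{rad}|\cdot(\text{number of }(1+\operatorname{rad})\text{-orbits on }\operatorname{rad})$, and relating $\mathrm{Aut}$-orbits to $(1+\operatorname{rad})$-orbits requires controlling the residual action of $\prod_i\mathrm{GL}_{e_i}$ on the set of $(1+\operatorname{rad})$-orbits, which you do not address.

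In short, the triangular shape of the relation is plausible and your leading-term analysis is right, but as written the argument does not close: the two reductions of $N$ and $M$ to the counts $[Q,\mathbf e']$ are asserted rather than proved, and the specific mechanisms you name (primary components of a nilpotent, dual Burnside) do not obviously do the job. This matches the paper's stance that the statement is, at present, a conjecture.
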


This conjecture is the analog of \cite[Corollary 4.2.5]{evseev2007groups}, which deals with the classical case (that is commuting pairs in $B_n\times T_n$).

Regarding the examples computed in \cite[page 93]{evseev2007groups} and the various positivity conjectures appearing in the theory of quiver representations (Kac conjectured the positivity of the polynomials counting representations of quivers solved in \cite{hausel2013positivity} and also in \cite{davison2018purity}, the positivity of absolutely cuspidal polynomials conjectured in \cite{bozec2019counting} and the deep geometric methods developed to tackle them), we take the risk of conjecturing the following

\begin{conj}
 For any acyclic quiver $Q$ and $\mathbf{d}\in\mathbb{Z}_{\geq 0}^{Q_0}$, the quantity $\overline{[Q,\mathbf{d}]}$ is a polynomial in $q$ with nonnegative coefficients.
\end{conj}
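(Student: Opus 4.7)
The plan is a geometric / purity argument, inspired by the cohomological approach used for Kac's positivity conjecture and cited in the paragraph preceding the statement.

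Write $A=A_{Q,\mathbf{d}}$ and $R=\operatorname{rad}A$, and consider the commuting variety
\[
\mathcal{M}_{Q,\mathbf{d}}=\{(x,y)\in A\times R : xy=yx\},
\]
a closed subscheme of an affine space over $\mathbb{F}_q$. Since $R$ is a two-sided ideal, the first projection $\pi_1\colon\mathcal{M}_{Q,\mathbf{d}}\to A$ has linear fibres $\ker(\operatorname{ad}_x|_R)$, so
\[
\overline{[Q,\mathbf{d}]}=\sum_{x\in A(\mathbb{F}_q)} q^{\dim_{\mathbb{F}_q}\ker(\operatorname{ad}_x|_R)}.
\]
Each summand is already a nonnegative power of $q$, and the conjecture reduces to a polynomial bookkeeping question: how many $x$ yield each centraliser dimension, as a function of $q$, and does the resulting sum assemble into a polynomial with nonnegative coefficients?

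First I would try to exhibit a cellular / affine-bundle stratification of $A$. A natural choice stratifies by $G$-orbits under the conjugation action of the unit group $G=A^{\times}$, which preserves $R$. The idea is to parametrise these orbits by a combinatorial invariant — a \emph{Jordan-like type} for radical endomorphisms of projective quiver representations, extending both the classical Jordan type and the Vera-L\'{o}pez--Arregi canonical forms used for $U_n$ — and to show that the locus of $x$ of each fixed type admits an iterated affine fibration over a product of Grassmannian or partial flag varieties. Cellularity of such varieties, together with the $q^{\dim}$ weighting coming from $\pi_1$, would then give polynomiality and nonnegativity simultaneously. A complementary route is to establish purity of the mixed Hodge or $\ell$-adic cohomology of $\mathcal{M}_{Q,\mathbf{d}}$, e.g.\ by finding a suitable $\mathbb{G}_m$-action and applying Bia\l{}ynicki-Birula, from which nonnegativity would follow via the Weil conjectures. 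A third, inductive avenue is to propagate nonnegativity along the reduction moves of \S\ref{reductions}--\S\ref{more reductions}, once one checks that each operation preserves not only counts but also a sum-of-$q$-powers decomposition; the source/sink splitting (Theorem \ref{source splitting}, Corollary \ref{sink splitting}) and zero-vertex removal (Theorem \ref{zero node removal}) are the most promising here, since they act on the radical's structure directly.

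The main obstacle is that polynomiality of $\overline{[Q,\mathbf{d}]}$ alone is essentially open: the preceding conjecture identifies it with polynomiality of the $[Q,\mathbf{e}]$ for all $\mathbf{e}\leq\mathbf{d}$, which specialises to Higman's conjecture for the linear $A_n$ quiver with $\mathbf{d}=(1,\ldots,1)$. A full proof of nonnegativity would therefore subsume Higman's conjecture and, in view of the Hall-algebraic motivation of \S\ref{furtherdirections}, very likely require a cohomological Hall algebra interpretation in which $\overline{[Q,\mathbf{d}]}$ appears as the graded dimension of a module over a positively-graded algebra, where positivity is a priori manifest. A realistic intermediate step is to verify nonnegativity directly from the explicit formulas of \S\ref{examples} and for small examples beyond the length-two path quivers, and to look for a sharp combinatorial interpretation of the coefficients (e.g.\ as dimensions of graded pieces of a naturally constructed Lie algebra, in analogy with the absolutely cuspidal polynomials of \cite{bozec2019counting}).
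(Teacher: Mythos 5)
This statement is a \emph{conjecture} in the paper, stated without proof, so there is no argument of the authors against which your attempt can be compared. Your proposal correctly recognises this: it is a strategy sketch, not a proof, and you explicitly acknowledge that even the polynomiality of $\overline{[Q,\mathbf{d}]}$ is open and would specialise (via the linear $A_n$ quiver with $\mathbf{d}=(1,\dots,1)$) to Higman's conjecture itself, so a nonnegativity proof would subsume it.

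Within that scope, what you do assert is sound. Since $R=\operatorname{rad}A_{Q,\mathbf{d}}$ is a two-sided ideal, $\operatorname{ad}_x$ does restrict to an endomorphism of $R$ for each $x\in A$, the fibre of $\pi_1\colon\mathcal{M}_{Q,\mathbf{d}}\to A$ over $x$ is the linear subspace $\ker(\operatorname{ad}_x|_R)\subseteq R$, and
\[
\overline{[Q,\mathbf{d}]}=\sum_{x\in A(\mathbb{F}_q)} q^{\dim\ker(\operatorname{ad}_x|_R)}
\]
is a sum of nonnegative $q$-powers; this is exactly the sort of decomposition that makes the positivity conjecture plausible and is in the spirit of the motivations the paper cites (Kac positivity via purity in \cite{hausel2013positivity} and \cite{davison2018purity}, absolutely cuspidal polynomials in \cite{bozec2019counting}). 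Of your three proposed routes, the one most directly connected to the tools developed in this paper is propagation of a sum-of-$q$-powers decomposition along the reduction moves of \S\ref{reductions}--\S\ref{more reductions}; note, however, that the radical-preserving isomorphisms of Theorem \ref{source conversion}, Theorem \ref{source splitting} and Corollary \ref{sink splitting} are only stated for $R$, so to transport the count $\overline{[Q,\mathbf{d}]}$ (which lives on $A\times R$, not $R\times R$) you would additionally need to control how the ambient algebra and the $\operatorname{ad}$-action change, which is not automatic. There is no gap to flag, because no proof is claimed; but be aware that none of the routes sketched is known to close, and the paper itself does not attempt any of them.
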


\subsection{Reduction of the conjecture}
Using the operations described in Theorem \ref{source conversion}, Corollary \ref{sink conversion}, Theorem \ref{source splitting} and Corollary \ref{sink splitting}, Conjecture \ref{THE CONJECTURE} is equivalent to the same conjecture for oriented quivers with a single source and a single sink (that is, quivers with a set of vertices $\{1,\hdots,n\}$ for some $n$ with only source $1$, only sink $n$, and the condition that there exists an arrow $i\rightarrow j$ only if $i<j$).

\begin{prop}
Let $Q$ be an oriented quiver with vertices $1,\dots,n$ and suppose that $n$ is the only sink. Let $\mathbf{d}\in\mathbb{Z}_{\geq 0}^{Q_0}$ be a summand vector, giving the projective representation $P_{Q,\mathbf{d}}$ of $Q$. We denote by $V=\bigoplus_{i=1}^nV_i$ the underlying vector space of $P_{Q,\mathbf{d}}$. Then the map
\begin{equation*}
\begin{matrix}
    A_{Q,\mathbf{d}} = \operatorname{End}(P_{Q,\mathbf{d}}) & \longrightarrow & \operatorname{End}(V_n);\\
    f = (f_1,\hdots,f_n) & \longmapsto & f_n
\end{matrix}
\end{equation*}
is injective.
\end{prop}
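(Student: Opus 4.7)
The plan is to unwind the matrix decomposition of $A_{Q,\mathbf{d}}$ and apply naturality at each entry. Writing $P_{Q,\mathbf{d}}=\bigoplus_{k=1}^t P_Q(v_k)$ and using the excerpt's conventions, every $f\in A_{Q,\mathbf{d}}$ decomposes as $f=[f_{ij}]_{i,j=1,\dots,t}$ with $f_{ij}\in\operatorname{Hom}(P_Q(v_j),P_Q(v_i))$. By Remark \ref{map_def}, $f_{ij}$ is uniquely determined by its image
\[
\phi_{ij}(f_{ij}) \;=\; f_{ij}^{(v_j)}(e_{v_j}) \;=\; \sum_c \lambda_c^{ij}\, c \;\in\; P_Q(v_i)_{v_j},
\]
where $c$ ranges over paths $v_i\curvearrowright v_j$ in $Q$. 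The goal is to show that if $f_n = 0$ then every $\lambda_c^{ij}$ vanishes, and hence $f=0$.

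The key step is to express $f_{ij}^{(n)}$ in terms of the coefficients $\lambda_c^{ij}$. Because $Q$ is oriented in the sense of the preceding paragraph (so arrows only go from lower- to higher-indexed vertices, making $Q$ acyclic) and $n$ is the only sink, a path $p:v_j\curvearrowright n$ exists for every $j$: take the trivial path when $v_j=n$, and otherwise follow any sequence of outgoing arrows, which must terminate at a sink and hence at $n$. Let $\varphi_p$ denote the action of $p$ as used in the proof of Proposition \ref{basis_composition}. Naturality of $f_{ij}$ with respect to the structure maps of $P_Q(v_j)$ and $P_Q(v_i)$ gives the commutative square $f_{ij}^{(n)}\circ\varphi_p = \varphi_p\circ f_{ij}^{(v_j)}$, so evaluating at $e_{v_j}$ yields
\[
f_{ij}^{(n)}(p) \;=\; \varphi_p\!\left(\sum_c \lambda_c^{ij}\, c\right) \;=\; \sum_c \lambda_c^{ij}\, cp \;\in\; P_Q(v_i)_n.
\]

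The hypothesis $f_n=0$ reads $f_{ij}^{(n)}=0$ for every $i,j$, so $\sum_c \lambda_c^{ij}\, cp = 0$ for the chosen $p$. I would finish with the combinatorial observation that right concatenation by a fixed path is injective on arrow-sequences (two paths $cp$ and $c'p$ agreeing as words in the arrows must have $c=c'$ after trimming the last $|p|$ letters). Hence $\{cp\mid c:v_i\curvearrowright v_j\}$ is a subset of the path basis of $P_Q(v_i)_n$ of the same cardinality as $\{c\mid c:v_i\curvearrowright v_j\}$, so the linear combination vanishes only if each $\lambda_c^{ij}=0$. This forces $f_{ij}=0$ for all $i,j$, whence $f=0$. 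The only real obstacle is cleanly extracting existence of $p:v_j\curvearrowright n$ from the hypotheses on $Q$; everything else is mechanical once Remark \ref{map_def} and the naturality square from the proof of Proposition \ref{basis_composition} are invoked.
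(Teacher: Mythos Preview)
Your proof is correct and follows essentially the same approach as the paper: both arguments pick a path from each relevant vertex to the unique sink $n$, invoke the naturality square $f^{(n)}\circ\varphi_p=\varphi_p\circ f^{(\cdot)}$, and conclude via the injectivity of right concatenation by $p$ on paths. The paper states this more tersely---working directly with the components $f_i:V_i\to V_i$, choosing a maximal-length path $c$ from $i$ (which must end at $n$), and simply asserting that $\varphi_c:V_i\to V_n$ is injective---whereas you unpack the same reasoning through the matrix decomposition $f=[f_{ij}]$ and the explicit path-basis coefficients $\lambda_c^{ij}$.
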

\begin{proof}
Let $1\leq i\leq n$. Since $Q$ is acyclic, there are a finite number of paths starting at $i$ and hence there exists a path $c$ with $s(c) = i$ of maximal length. That is, so that $l(c) \geq l(p)$ for all paths $p$ with $s(p)=i$. Such a path must end at a sink, so necessarily $t(c) = n$. The linear map $\varphi_c$:$V_i\longrightarrow V_n$ given by the action of $c$ is injective, and so the equality $\varphi_c f_i=f_n\varphi_c$ implies that $f_i$ is determined by $f_n$, proving the injectivity of the map $A_{Q,\mathbf{d}} \longrightarrow \operatorname{End}(V_n)$.
\end{proof}

In view of this result, it seems interesting to understand the image of this map as well as the image of the radical $\operatorname{rad}A_{Q,\mathbf{d}}$. It is reasonable to expect a description in terms of patterns groups. Which patterns groups can be obtained this way is an open question of interest.


\bibliographystyle{alpha}
\bibliography{Bibliography}

\begin{thebibliography}{HLRV13}

\bibitem[ASS06]{assem_skowronski_simson_2006}
Ibrahim Assem, Andrzej Skowronski, and Daniel Simson.
\newblock {\em Elements of the Representation Theory of Associative Algebras:
  Techniques of Representation Theory}, volume~1 of {\em London Mathematical
  Society Student Texts}.
\newblock Cambridge University Press, 2006.

\bibitem[Bri13]{bridgeland2013quantum}
Tom Bridgeland.
\newblock Quantum groups via hall algebras of complexes.
\newblock {\em Annals of Mathematics}, pages 739--759, 2013.

\bibitem[BS19]{bozec2019counting}
Tristan Bozec and Olivier Schiffmann.
\newblock Counting absolutely cuspidals for quivers.
\newblock {\em Mathematische Zeitschrift}, 292(1):133--149, 2019.

\bibitem[Dav18]{davison2018purity}
Ben Davison.
\newblock Purity of critical cohomology and kac’s conjecture.
\newblock {\em Mathematical Research Letters}, 25(2):469--488, 2018.

\bibitem[Evs07]{evseev2007groups}
Anton Evseev.
\newblock {\em Groups: uniformity questions and zeta functions}.
\newblock PhD thesis, Citeseer, 2007.

\bibitem[Hig60]{Higman}
Graham Higman.
\newblock Enumerating {$p$}-groups. i: Inequalities.
\newblock {\em Proceedings of the London Mathematical Society},
  s3-10(1):24--30, 1960.

\bibitem[HLRV13]{hausel2013positivity}
Tam{\'a}s Hausel, Emmanuel Letellier, and Fernando Rodriguez-Villegas.
\newblock Positivity for {K}ac polynomials and {DT}-invariants of quivers.
\newblock {\em Annals of mathematics}, pages 1147--1168, 2013.

\bibitem[Moz19]{Mozgovoy}
Sergey Mozgovoy.
\newblock Commuting matrices and volumes of linear stacks.
\newblock {\em arXiv preprint arXiv:1901.00690}, 2019.

\bibitem[PS15]{PS}
Igor Pak and Andrew Soffer.
\newblock On {H}igman’s {$k (U_n (\mathbb{F}_q))$} conjecture.
\newblock {\em arXiv preprint arXiv:1507.00411}, 2015.

\bibitem[RSZ21]{ruan2021lie}
Shiquan Ruan, Jie Sheng, and Haicheng Zhang.
\newblock Lie algebras arising from 1-cyclic perfect complexes.
\newblock {\em Journal of Algebra}, 586:232--288, 2021.

\bibitem[Sch14]{Schiffler}
Ralf Schiffler.
\newblock {\em Quiver representations}.
\newblock Springer, 2014.

\bibitem[SVDB01]{sevenhant2001relation}
Bert Sevenhant and Michel Van Den~Bergh.
\newblock A relation between a conjecture of {K}ac and the structure of the
  {H}all algebra.
\newblock {\em Journal of Pure and Applied Algebra}, 160(2-3):319--332, 2001.

\bibitem[VLA92]{VA1}
Antonio Vera-López and J.M. Arregi.
\newblock Conjugacy classes in {S}ylow {$p$}-subgroups of {$GL(n, q)$}.
\newblock {\em Journal of Algebra}, 152(1):1--19, 1992.

\bibitem[VLA95]{VA2}
Antonio Vera-López and J.M. Arregi.
\newblock Some algorithms for the calculation of conjugacy classes in the
  {S}ylow {$p$}-subgroups of {$GL(n, q)$}.
\newblock {\em Journal of Algebra}, 177(3):899--925, 1995.

\bibitem[VLA03]{VA3}
Antonio Vera-López and J.M. Arregi.
\newblock Conjugacy classes in unitriangular matrices.
\newblock {\em Linear Algebra and its Applications}, 370:85--124, 2003.

\end{thebibliography}

\end{document}